\newtheorem{theorem}{Theorem}
\newtheorem{lemma}[theorem]{Lemma}
\newtheorem{corollary}[theorem]{Corollary}
\newtheorem{remark}{Remark}
\newcommand{\E}[1]{\ensuremath{\mathbb{E}\left[#1\right]}}
\newcommand{\indicator}[1]{{\mathbf 1}_{\left\{ {#1} \right\} }}
\newcommand{\stirling}[2]{\left\{\mbox{\hspace{-1.5mm}}\begin{array}{c}#1\\
      #2\end{array}\mbox{\hspace{-1.5mm}}\right\}}
\newcommand{\R}{{\mathbf R}}
\def\d{{\text{ d}}}
\begin{document}

\title{On the One dimensional Poisson Random Geometric Graph} \author{L. Decreusefond }
\email{laurent.decreusefond@telecom-paristech.fr} \author{E. Ferraz}
\email{eduardo.ferraz@telecom-paristech.fr} \address{Institut Telecom, Telecom
  ParisTech, CNRS LTCI, Paris, France} \subjclass[2000]{60G55}

\begin{abstract}
Given a Poisson process on a bounded interval, its  random geometric
graph is the graph
whose vertices are the points of the Poisson process and edges exist
between two points if and only if their distance is less than a
 fixed given threshold. We compute explicitly  the distribution of the number
 of connected components of this graph. The proof relies on inverting
 some Laplace transforms.
\end{abstract}

\keywords{Poisson point process, random coverage, sensor networks,
  $M/D/1/1$ preemptive queue, cluster size, connectivity, Euler's
  characteristic, polylogarithm}
\maketitle

\section{Motivation}

As technology goes on \cite{kahn,lewis,pottie}, one can expect a wide
expansion of the so-called \textit{sensor networks}.  Such networks
represent the next evolutionary step in building, utilities,
industrial, home, agriculture, defense and many other
contexts~\cite{chong}.

These networks are built upon a multitude of small and cheap sensors
which are devices with limited transmission capabilities. Each sensor monitors a region around itself by measuring some
environmental quantities (e.g., temperature, humidity), detecting
intrusion, etc, and broadcasts its collected informations to other
sensors or to a central node. The question of whether information can
be shared among the whole network  is then of crucial importance.

Many researches have recently been dedicated to this problem
considering a variety of situations. It is possible to categorize
three main scenarios: those where it is possible to choose the
position of each sensor, those where sensors are arbitrarily deployed
in the target region with the control of a central station and those
where the sensor locations are random in a decentralized system.

The problem of the first scenario is that, in many cases, placing the
sensors is impossible or implies a high cost. Sometimes this
impossibility comes from the fact that the cost of placing each sensor
is too large and sometimes the network has an inherent random behavior
(like in the ad hoc case, where users move). In addition, this policy
cannot take into account the configuration of the network in the case
of failure of some sensor.

The drawback of the second scenario is a higher unity cost of sensors,
since each one has to communicate with the central station. Besides,
the central station itself increases the cost of the whole
system. Moreover, if sensors are supposed to know their positions, an
absolute positioning system has to be included in each sensor, making
their hardware even more complex and then  more expensive.


It is thus  important to investigate
the third scenario: randomly located sensors, no central
station. Actually, if we can predict some characteristics of the topology of a random network, the number of
sensors (or, as well, the power supply of them) can  be \textit{a
  priori} determined such that a given network may operate  with high
probability. For instance, we can choose the mean number of sensors
such that, if they are randomly deployed, there is more than 99\% of
probability the network to be completely connected.  

Usually, sensors are deployed in the plane or
in the ambient space, thus mathematically speaking, one has to deal
with configurations in $\R^2,$ $\R^3$ or a manifold. The recent works of Ghrist and his
collaborators~\cite{ghrist,silvacontrol}  show how, in any dimension, algebraic topology can be used to
compute the coverage of a given configuration of sensors. Trying  to
pursue their work for random settings, we quickly realized that the dimension  of the
ambient space played a key role. We then first began by the analysis of
dimension $1$, which appeared  as the most simple situation. There is here no need of the sophisticated tools of
algebraic topology. However, it doesn't seem that the problem of coverage
on a finite length interval has already been solved in the full extent
we do here. Higher dimensions will be the object of forthcoming
papers.

We
here address the situation where the radio communications are
sufficently polarized so that we can consider we have some privileged dimension.
Random coverage in one dimension has been already studied in different
contexts. Some years ago already, several analysis were done on the
circle (\cite{andrew,MR557459} and references therein) for a fixed
number of points and uniform distribution of points over the
circle. The question addressed was that of full coverage. More
recently, in~\cite{kumar},  efficient algorithms to
determine whether a region is covered considering the sensors are
deployed over a circle and distributed as a Poisson point process are given. In
\cite{andrew,noori}, the distribution of a fixed number of clusters
(see below for the definition) is given. In \cite{manohar}, sensors
are actually placed in a plan, have a fixed radius of observation. The
trace of the covered regions over a line is then studied.

Our main result is the distribution of
the number of connected components for a Poisson distribution of
sensors in a bounded interval. Our method is very much related to
queueing theory. Indeed, clusters, i.e., sequence of neighboring
sensors, are the strict analogous of busy periods. As will appear
below, our analysis turns down to be that of an M/D/1/1 queue with
preemption: when a customer arrives during a service, it preempts the
server and, since there is no buffer, the customer who was in service
is removed from the queuing system. To the best of our knowledge, such
a system has never been studied but the usual methods of Laplace
transform, renewal processes, work perfectly and with a bit of
calculus, one can compute all the characteristics we are interested
in.

The paper is organized in the following way: Section II presents the
physical and  random assumptions and defines the relevant quantities to be
calculated. The calculations and analytical results are presented in
 Section III. In section IV, two other scenarios are
presented, considering the number of incomplete clusters and clusters
placed in a circle. In Section V,  numerical examples are presented
and analyzed.

\section{Problem Formulation}

Let $L>0$ be the length of the domain in which sensors are located. We
assume that sensors are distributed according to a Poisson process of
intensity $\lambda$. Let $(X_i,\, 1\le i \le n)$ be the positions of
the sensors. We thus know that the random variables, $\Delta
X_i=X_{i+1}-X_i$ are i.i.d. and exponentially distributed. Due to
their technological limitations, each sensor can communicate only with
other sensors within a range $\epsilon$: two sensors, located
respectively at $x$ and $y$, are said to be \textit{directly
  connected} whenever $|x-y|\le \epsilon.$ For $i<j$, two sensors
located at $X_i$ and $X_j$ are indirectly connected if $X_l$ and
$X_{l+1}$ are directly connected for any $l=i,\, \cdots,\, j-1.$ A set
of sensors directly or indirectly connected is called a
\textit{cluster} and the connectivity of the whole network is measured
by the number of clusters.

The number of points in the
interval $[0,x]$ is denoted by
$N_x=\sum_{n=0}^{\infty}\indicator{X_n\leq x}$. The random variable
$A_i$ given by
\begin{eqnarray*}
  A_i=\left\{
    \begin{array}{ll}
      X_1&\mbox{ if }i=1,\\
      \mbox{inf}_{j}\{X_j|X_j>A_{i-1},X_j-X_{j-1}>\epsilon \}&\mbox{ if }i>1,
    \end{array}
  \right.
\end{eqnarray*}
represents the beginning of the $i$-th cluster, denoted by $C_i$. In
the same way, the end of this same cluster, $E_i$, is defined by
\begin{eqnarray*}
  E_i=\mbox{inf}_{j}\{X_j+\epsilon|X_{j}>A_i,X_{j+1}-X_{j}>\epsilon \}.
\end{eqnarray*}
So, the $i$-th cluster, $C_i$, has a number of points given by
$N_{E_i}-N_{A_i}$.  We define the
length $B_i$ of $C_i$ as $E_i-A_i$. The intercluster size, $D_i$, is
the distance between the end of $C_i$ and the beginning of $C_{i+1}$,
which means that $D_i=A_{i+1}-E_i$ and $\Delta A_i$ is the distance
between the first points of two consecutive clusters $C_i$, given by
$\Delta A_i=A_{i+1}-A_{i}=B_i+D_i$. 

\begin{remark}
  With this set of assumptions and definitions, we can see our problem
actually as an $M/D/1/1$ preemptive queue, Fig.~\ref{fig: queue}. In this non-conservative
system, the service time is deterministic and given by
$\epsilon$. When a customer arrives during a service, the served
customer is removed from the system and replaced by the arriving
customer. Within this framework, a cluster corresponds to what is
called a busy period, the intercluster size is the idle time  and $A_i+D_i$ is the length of the $i$-th cycle.

\begin{figure}[!ht]
\begin{center}
\includegraphics*[width=\textwidth]{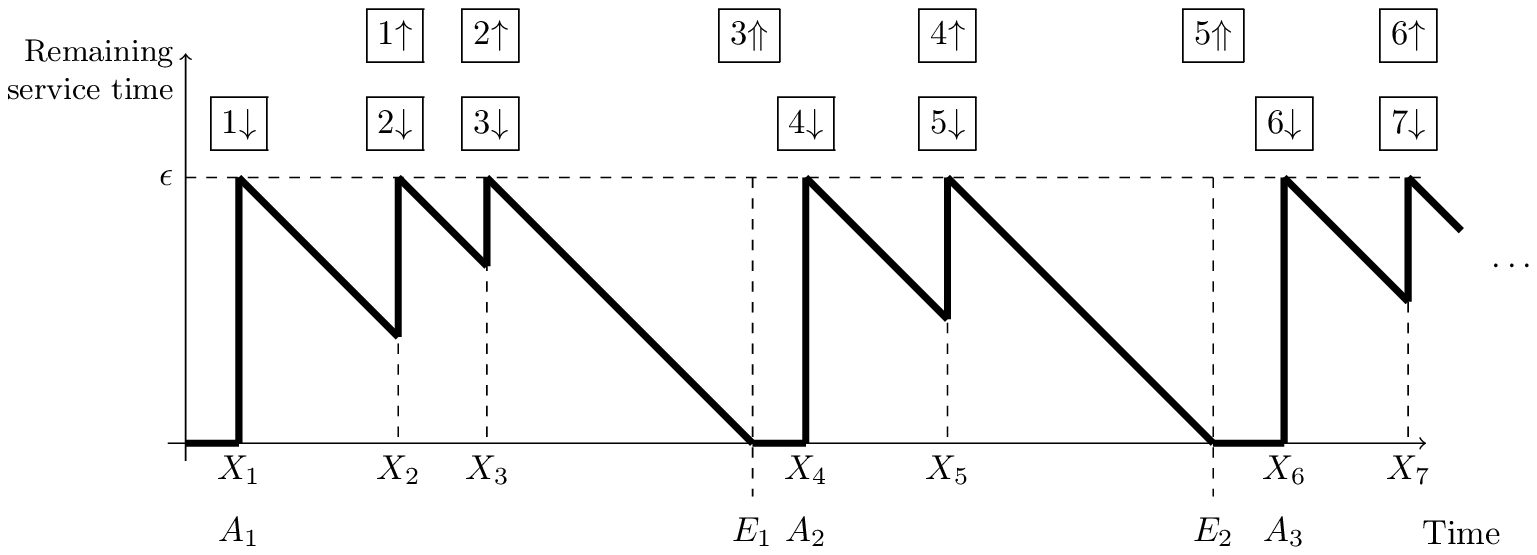}
\end{center}
  \caption{Queueing representation of the proposed problem. A down
    arrow denotes that user $i$ starts to be served. An up arrow
    indicates that user $i$ leaves the system without have finished
    the service. A double up arrow illustrates that the service of user
    $i$ finishes. It is also shown the beginning and the end of the
    $i$th busy period, respectively, $A_i$ and $E_i$.}
   \label{fig: queue}
 \end{figure}

\end{remark}

The number of complete clusters in
$[0,L]$ corresponds to the number of connected components $\beta_0(L)$
(since in dimension $1$, it coincides with the Euler characteristics
of the union of intervals, see \cite{Ghrist:2005fr,ghrist}) of
the network. The 
distance between the beginning of the first cluster and the beginning
of the $(i+1)$-th one is defined as $U_i=\sum_{k=1}^{i}\Delta
A_k$.  We also define $\Delta X_0=D_0=X_1$. Fig.~\ref{line_def}
illustrates these definitions.
\begin{figure}[!ht]
  \centering \scalebox{0.4}{\includegraphics{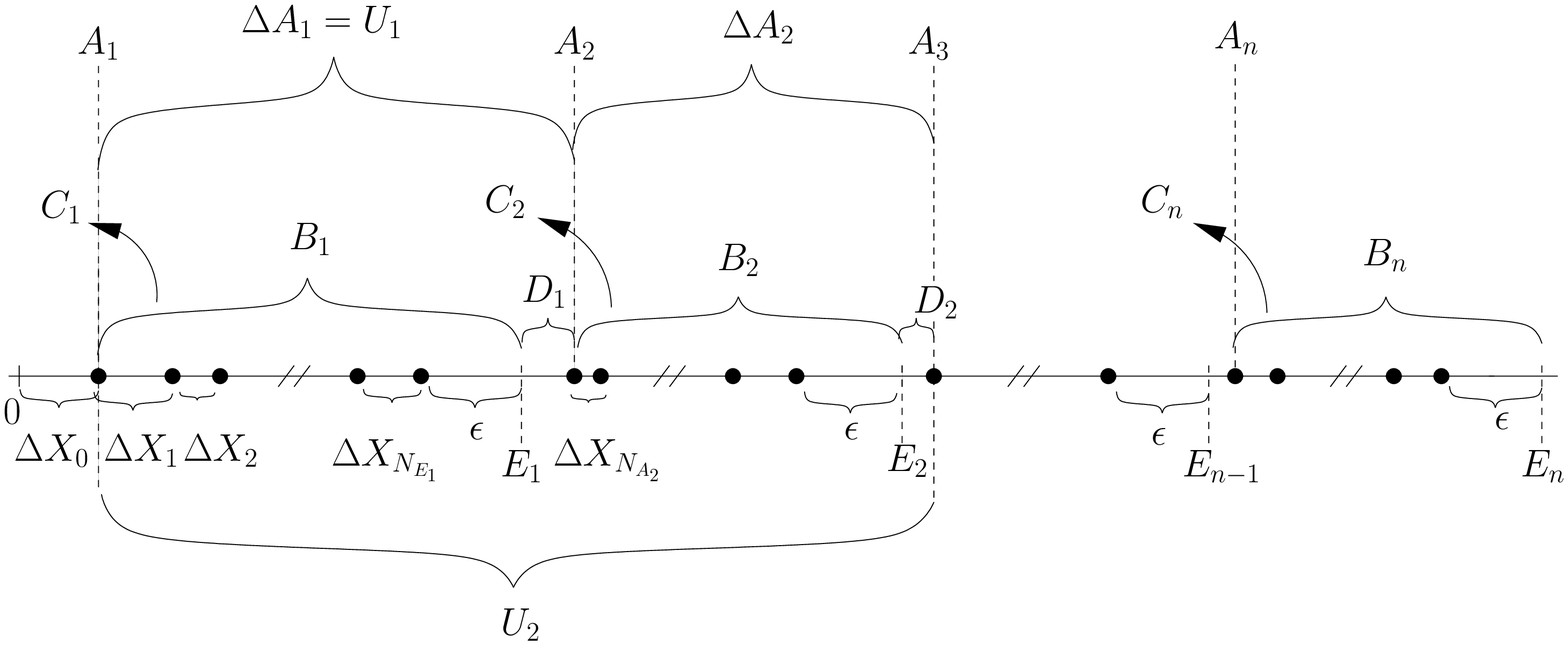}}
  \caption{Definitions of the relevant quantities of the network:
    distance between points, distance between clusters, the size of
    clusters, the size interclusters, the beginning of clusters and
    the end of clusters.}
  \label{line_def}
\end{figure}

\begin{lemma}
  \label{lemma: stopping times}
  For any $i\in\mathbb{N}_+^*$, $A_i$ and $E_i$ are stopping times.
\end{lemma}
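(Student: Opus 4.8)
The plan is to work with the natural filtration $(\mathcal{F}_t)_{t\ge 0}$ of the Poisson process, $\mathcal{F}_t=\sigma(N_s,\ 0\le s\le t)$, and to verify directly the defining property, namely $\{A_i\le t\}\in\mathcal{F}_t$ and $\{E_i\le t\}\in\mathcal{F}_t$ for every $t\ge 0$. I would argue by induction on $i$, alternating between $A_i$ and $E_i$, since the recursive definitions build $A_i$ out of $A_{i-1}$ and $E_i$ out of $A_i$. The base case is immediate: $A_1=X_1=\inf\{t:N_t\ge 1\}$, so $\{A_1\le t\}=\{N_t\ge 1\}\in\mathcal{F}_t$, which is the familiar fact that the first point of a point process is a stopping time.

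For the inductive step on $A_i$, I would recast the defining condition ``$X_j>A_{i-1}$ and $X_j-X_{j-1}>\epsilon$'' geometrically: $A_i$ is the first epoch $t>A_{i-1}$ at which a point occurs while the preceding window $[t-\epsilon,t)$ contains no point (the gap $X_j-X_{j-1}>\epsilon$ is exactly the statement that the point immediately before $t$ lies more than $\epsilon$ to the left). Both ``a point occurs at $t$'' and ``$[t-\epsilon,t)$ is empty of points'' are decided by observing the process on $[0,t]$, and $A_{i-1}$ is a stopping time by the induction hypothesis, so $\{A_i\le t\}$ is $\mathcal{F}_t$-measurable.

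For $E_i$ I would proceed analogously, writing $E_i=X_j+\epsilon$ where $X_j$ is the last point of the cluster, i.e.\ the first point after $A_i$ whose successor gap satisfies $X_{j+1}-X_j>\epsilon$. Geometrically this says that $E_i$ is the first epoch $t\ge A_i$ such that $t-\epsilon$ is a point and the trailing window $(t-\epsilon,t]$ is empty. At time $t=E_i$ the entire window $[t-\epsilon,t]$ has already been observed, so the corresponding event lies in $\mathcal{F}_t$, and combined with the measurability of $A_i$ this gives $\{E_i\le t\}\in\mathcal{F}_t$.

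The conceptual heart of the argument, and the point I would stress, is the $+\epsilon$ look-ahead in the definition of $E_i$: the last point $X_j$ of a cluster is \emph{not} itself a stopping time, because at time $X_j$ one cannot yet know whether the next gap will exceed $\epsilon$; one must wait an additional time $\epsilon$ to confirm that the cluster has closed, which is precisely why $E_i=X_j+\epsilon$ rather than $X_j$ is adapted. The main obstacle is therefore not any deep probability but careful bookkeeping: I would make sure the interval conventions (open versus half-open, and the treatment of the degenerate single-point cluster, where $E_i=A_i+\epsilon$) are handled consistently, so that the events exhibited above genuinely coincide with the infima appearing in the definitions of $A_i$ and $E_i$.
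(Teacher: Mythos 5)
Your proof is correct and follows essentially the same route as the paper: both work with the natural filtration $\mathcal{F}_t=\sigma(N_s,\ s\le t)$ and verify the defining property directly, by expressing $\{A_i\le t\}$ (resp.\ its complement) in terms of the point configuration observed on $[0,t]$ — your ``empty window $[t-\epsilon,t)$'' condition is just a geometric restatement of the gap conditions $\Delta X_j>\epsilon$ that the paper writes out for $A_2$. If anything, your version is more complete than the paper's, which checks only $A_1$ and $A_2$ explicitly and dismisses the remaining $A_i$ and all the $E_i$ with ``we proceed along the same line''; your induction and the explicit $+\epsilon$ look-ahead argument for $E_i$ fill exactly the steps the paper leaves to the reader.
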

\begin{proof}
  Let us consider the filtration $\mathcal{F}_t=\sigma\{N_a,a\leq
  t\}$. For $i=1$, we have
  \begin{eqnarray*}
    \{A_1\leq t\}\Leftrightarrow\{X_1\leq t\}\Leftrightarrow \{N_t\geq 1\}\in \mathcal{F}_t.
  \end{eqnarray*}
  Thus, $A_1$ is a stopping time. For $A_2$, we have
  \begin{eqnarray*}
    \{A_2> t\}\Leftrightarrow\bigcup_{n\geq1}\left\{N_t=n, \bigcup_{j=1}^n\left\{\Delta X_j\geq\epsilon,\bigcup_{k=j+1}^n\{\Delta X_k\leq\epsilon\} \right\}\right\}\in \mathcal{F}_t,
  \end{eqnarray*}
  so $A_2$ is also a stopping time. We proceed along the same line for
  others $A_i$ and as well for $E_i$ to prove that they are stopping
  times.
\end{proof}
Since $N$ is a strong Markov process, the next corollary is immediate.
\begin{corollary}
  \label{cor: independence}
  The set $\{B_i,D_i\, |\, i=\ge 1\}$ is a set of independent random
  variables. Moreover, $D_i$ is distributed as an exponential random
  variable with mean $1/\lambda$ and the random variables
  $\{B_i\, |\, i\ge 1\}$ are i.i.d.
\end{corollary}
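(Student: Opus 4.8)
The plan is to derive the three assertions from the strong Markov property of $N$, exactly as the surrounding sentence suggests. I would first record the tool: for any a.s.\ finite stopping time $T$ of the filtration $\mathcal{F}_t=\sigma\{N_a, a\le t\}$, the shifted process $\tilde N^T_s=N_{T+s}-N_T$ is again a Poisson process of intensity $\lambda$ and is independent of $\mathcal{F}_T$. Since the statement concerns the $A_i,E_i$ only through the increments $B_i,D_i$, it is harmless to view $N$ as living on all of $\R_+$, so that every $A_i,E_i$ is a.s.\ finite; by Lemma~\ref{lemma: stopping times} they are stopping times and the tool applies at each of them.

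For the law of $D_i$, the key geometric observation I would make is that $A_{i+1}$ is precisely the first point of $N$ lying strictly beyond $E_i$: writing $E_i=X_j+\epsilon$ with $X_j$ the last point of $C_i$ and $X_{j+1}-X_j>\epsilon$, one has $X_{j+1}>E_i$ with no point in $(E_i,X_{j+1})$, and $A_{i+1}=X_{j+1}$. Hence $D_i=A_{i+1}-E_i$ is the first arrival time of the fresh process $\tilde N^{E_i}$, so it is exponential with mean $1/\lambda$ and independent of $\mathcal{F}_{E_i}$.

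Identical distribution of the $B_i$ comes from the same principle applied at $A_i$: because $A_i$ is a stopping time carrying a point of $N$, the process $\tilde N^{A_i}$ is a fresh copy of $N$, and $B_i=E_i-A_i$ is one and the same measurable functional of $\tilde N^{A_i}$ for every $i$, namely the total length of the maximal initial run of spacings not exceeding $\epsilon$, augmented by $\epsilon$. Thus all $B_i$ have the law of $B_1$, and, as already seen, all $D_i$ are $\mathrm{Exp}(\lambda)$. To obtain that the \emph{entire} family $\{B_i,D_i\}$ is independent, I would read the variables off the consecutive blocks delimited by $A_1\le E_1\le A_2\le E_2\le\cdots$ and iterate the Markov property: $B_i$ is $\mathcal{F}_{E_i}$-measurable whereas $D_i$ is a functional of $\tilde N^{E_i}$, hence independent of $\mathcal{F}_{E_i}$ and in particular of $B_1,D_1,\dots,B_i$; likewise $B_{i+1}$ is a functional of $\tilde N^{A_{i+1}}$, hence independent of $\mathcal{F}_{A_{i+1}}$, which contains $B_1,D_1,\dots,B_i,D_i$. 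An induction on $i$ then delivers mutual independence of the whole collection.

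The part requiring genuine care is not any single distributional computation but the bookkeeping of these iterated shifts: at each step one must verify that the freshly introduced variable is measurable with respect to the post-$T$ process for the relevant stopping time $T$, while all previously introduced variables lie in $\mathcal{F}_T$. Organizing the argument around the block decomposition above makes this transparent, the one substantive input being the geometric fact that $A_{i+1}$ is the first Poisson point past $E_i$, which is exactly what turns $D_i$ into a bona fide first-arrival time and hence into an exponential variable.
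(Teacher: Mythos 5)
Your proof is correct and takes essentially the same approach as the paper: the paper gives no written argument at all, simply declaring the corollary ``immediate'' since $N$ is a strong Markov process, applied at the stopping times of Lemma~\ref{lemma: stopping times}. Your write-up supplies exactly the details this one-line appeal leaves implicit --- $D_i$ as the first arrival of the post-$E_i$ process, $B_i$ as one fixed functional of the post-$A_i$ process, and the sequential induction yielding mutual independence.
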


\section{Calculations}
\label{calculations}
\begin{theorem}
  \label{cor: laplace d}
  The Laplace transform of the distribution of $B_i$, is given by
  \begin{equation*}
    \E{e^{-sB_i}}=\frac{\lambda+s}{\lambda + se^{(\lambda+s)\epsilon}}\cdotp\label{eq: laplace d}
  \end{equation*}
\end{theorem}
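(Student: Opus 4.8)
The plan is to use the independence and identical distribution of the $B_i$ granted by Corollary~\ref{cor: independence}, so that it suffices to compute the common Laplace transform of a single cluster length $B$. First I would unwind the definition of a cluster in terms of the interarrival gaps: a cluster starts at one of the $X_j$, and successive points are incorporated as long as the consecutive gaps $\Delta X$ remain below $\epsilon$; the cluster terminates at the first gap exceeding $\epsilon$, with its right endpoint a distance $\epsilon$ beyond the last point it contains. Writing $Y$ for the first gap following the cluster's initial point, there are exactly two cases. Either $Y>\epsilon$, in which case the cluster reduces to its initial point and $B=\epsilon$ (with $P(Y>\epsilon)=e^{-\lambda\epsilon}$); or $Y\le\epsilon$, in which case the point at distance $Y$ joins the cluster and the length splits as $B=Y+B'$, where $B'$ is the length of the cluster emanating from that second point.

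The structural observation I expect to be the crux is that, on the event $\{Y\le\epsilon\}$, the residual length $B'$ is independent of $Y$ and has the same law as $B$. This is precisely the self-similarity coming from the strong Markov property of the Poisson process together with the independence of the gaps (the same ingredients behind Lemma~\ref{lemma: stopping times} and Corollary~\ref{cor: independence}): the tail gaps $\Delta X_2,\Delta X_3,\dots$ generating $B'$ are independent of $Y=\Delta X_1$ and regenerate a fresh cluster, so $B'\overset{d}{=}B$ and factorizes. Conditioning on $Y$ and using $B=\epsilon$ on $\{Y>\epsilon\}$ then yields the one-step renewal identity
\begin{equation*}
  \E{e^{-sB}} = e^{-\lambda\epsilon}\,e^{-s\epsilon} + \E{e^{-sY}\indicator{Y\le\epsilon}}\,\E{e^{-sB}}.
\end{equation*}

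It then remains to evaluate the truncated transform $\E{e^{-sY}\indicator{Y\le\epsilon}}=\int_0^\epsilon \lambda e^{-(\lambda+s)y}\,\d y = \lambda(1-e^{-(\lambda+s)\epsilon})/(\lambda+s)$ and to solve the resulting linear fixed-point equation for $\E{e^{-sB}}$. Using $e^{-(\lambda+s)\epsilon}=e^{-\lambda\epsilon}e^{-s\epsilon}$ and simplifying $1-\lambda(1-e^{-(\lambda+s)\epsilon})/(\lambda+s)=(s+\lambda e^{-(\lambda+s)\epsilon})/(\lambda+s)$, one divides through and multiplies numerator and denominator by $e^{(\lambda+s)\epsilon}$ to reach the claimed form $(\lambda+s)/(\lambda+s\,e^{(\lambda+s)\epsilon})$. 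This last part is routine algebra; essentially all the probabilistic content sits in justifying the regeneration step above. As sanity checks, the formula tends to $1$ as $\epsilon\to 0$ and to $e^{-s\epsilon}$ as $\lambda\to 0$, matching the degenerate single-point cluster in each regime.
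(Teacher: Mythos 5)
Your proposal is correct, and it takes a genuinely different route from the paper. The paper's proof enumerates: it decomposes over the number of points $N_{E_1}=n$ in the first cluster, writes $e^{-sB_1}\indicator{N_{E_1}=n}$ as $e^{-s(\sum_{j=1}^{n-1}\Delta X_j+\epsilon)}$ times the product of truncated-gap indicators $\indicator{\Delta X_j\le\epsilon}$, $j<n$, and the terminating indicator $\indicator{\Delta X_n>\epsilon}$, factors the expectation by independence of the gaps, and sums the resulting geometric series $\sum_{n}\bigl(\tfrac{\lambda}{s+\lambda}(1-e^{-(s+\lambda)\epsilon})\bigr)^{n-1}e^{-\lambda\epsilon}e^{-s\epsilon}$. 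Your proof instead conditions on a single step (the first gap $Y$) and closes the computation with a regeneration identity, $B=Y+B'$ on $\{Y\le\epsilon\}$ with $B'$ independent of $Y$ and equidistributed with $B$, then solves the linear equation. The two are equivalent at bottom — iterating your fixed-point equation reproduces exactly the paper's geometric series, and both rest on the same elementary computation of $\E{e^{-sY}\indicator{Y\le\epsilon}}$ — but the organization differs. What your route buys is a cleaner structural argument: it is the standard busy-period functional-equation technique, which fits nicely with the paper's own remark that the model is an $M/D/1/1$ preemptive queue, and it avoids any series manipulation. What it requires in exchange is the justification of the regeneration step, which you correctly identify as the crux and ground in the i.i.d.\ gaps and the strong Markov property, plus two small points worth making explicit in a final write-up: $\E{e^{-sB}}$ is finite (clear, since $B\ge0$ and $s\ge0$), and the coefficient $\E{e^{-sY}\indicator{Y\le\epsilon}}\le 1-e^{-\lambda\epsilon}<1$, so the linear equation determines $\E{e^{-sB}}$ uniquely.
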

\begin{proof}
  Since $\Delta X_j$ is an exponentially distributed random variable,
  \begin{equation*}
    \E{e^{-s\Delta X_j}\indicator{\Delta X_j\leq\epsilon}}=\int_0^{\epsilon}e^{-st}\lambda e^{-\lambda t}dt 
    =\frac{\lambda}{s+\lambda}\left(1-e^{-(s+\lambda)\epsilon}\right).
  \end{equation*}
   Hence, the Laplace transform of the
  distribution of $B_1$ is given by
  \begin{align*}
    \E{e^{-sB_1}}&=\sum_{n=1}^{\infty}\E{e^{-sB_1},N_{E_1}=n}\nonumber\\
    &=\sum_{n=1}^{\infty}\E{e^{-s(\sum_{j=1}^{n-1}\Delta X_j+\epsilon)}\indicator{\Delta X_n>\epsilon}\prod_{j=1}^{n-1}\indicator{\Delta X_j\leq \epsilon}}\nonumber\\
    &=\sum_{n=1}^{\infty}\left(\E{e^{-s\Delta X_1}\indicator{\Delta X_1\leq\epsilon}}\right)^{n-1}\E{e^{-s\Delta X_n}\indicator{\Delta X_n>\epsilon}} e^{-s\epsilon}\nonumber\\
    &=\sum_{n=0}^{\infty}\left(\frac{\lambda}{s+\lambda}(1-e^{-(s+\lambda)\epsilon})\right)^{n}e^{-s\lambda} e^{-s\epsilon} \nonumber\\
    &=\frac{\lambda+s}{se^{\lambda\epsilon}e^{s\epsilon}+\lambda},
  \end{align*}
Using Collorary~\ref{cor: independence}, we have $\E{e^{-sB_1}}=\E{e^{-sB_i}}$, which concludes the proof.
\end{proof}
From this result, we can immediately calculate the Laplace transform
of the distribution of $\Delta A_i$.  Since $\Delta A_i=B_i+D_i$, we
have $\E{e^{-s\Delta A_i}}=\E{e^{-s(B_i+D_i)}}$ and using
Corollary~\ref{cor: independence}:
\begin{equation*}
  \E{e^{-s\Delta A_i}}=\E{e^{-sB_i}}\E{e^{-sD_i}}=\frac{\lambda}{\lambda+se^{(\lambda+s)\epsilon}}\cdotp
\end{equation*}
\begin{corollary}
  \label{cor: laplace delta}
  The Laplace transform of the distribution of $U_n$, for $n\geq0$ is
  given by
  \begin{equation*}
    \E{e^{-sU_n}}=\frac{\lambda^n}{\left(\lambda+ se^{(\lambda+s)\epsilon}\right)^n}\cdotp\label{eq: laplace delta}
  \end{equation*}
\end{corollary}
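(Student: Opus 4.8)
The plan is to recognize $U_n$ as a sum of independent, identically distributed increments and to exploit the multiplicativity of the Laplace transform under independence. By definition $U_n=\sum_{k=1}^{n}\Delta A_k$, and since $\Delta A_k=B_k+D_k$, Corollary~\ref{cor: independence} tells us that the whole family $\{B_k,D_k\,|\,k\ge 1\}$ consists of independent random variables, with the $B_k$ i.i.d.\ and the $D_k$ i.i.d.\ exponential. Because each increment $\Delta A_k$ is assembled from a distinct index block $\{B_k,D_k\}$ of this independent family, the increments $\Delta A_k$ are themselves mutually independent and identically distributed, with common Laplace transform already computed just before the statement.

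First I would dispose of the degenerate case $n=0$: the empty sum gives $U_0=0$, hence $\E{e^{-sU_0}}=1$, which matches the claimed formula read as an empty product. For $n\ge 1$, I would write $e^{-sU_n}=\prod_{k=1}^{n}e^{-s\Delta A_k}$ and take expectations; the mutual independence established above lets the expectation factor across the product, so that $\E{e^{-sU_n}}=\prod_{k=1}^{n}\E{e^{-s\Delta A_k}}=\bigl(\E{e^{-s\Delta A_1}}\bigr)^{n}$.

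Finally I would substitute the already-derived single-increment transform. From the display preceding this corollary, $\E{e^{-s\Delta A_i}}=\lambda/\bigl(\lambda+se^{(\lambda+s)\epsilon}\bigr)$, whence $\E{e^{-sU_n}}=\lambda^{n}/\bigl(\lambda+se^{(\lambda+s)\epsilon}\bigr)^{n}$, exactly as asserted.

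There is no genuine analytic obstacle here; the only point deserving a moment's care is the passage from pairwise to \emph{mutual} independence of the increments $\Delta A_k$, which is why I rely on the full independence of the family $\{B_k,D_k\}$ furnished by Corollary~\ref{cor: independence} rather than on any weaker pairwise statement. Everything else is the routine factorization of the Laplace transform of a sum of independent variables.
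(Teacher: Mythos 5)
Your proof is correct and follows essentially the same route as the paper: both rest on Corollary~\ref{cor: independence} to factor the Laplace transform of the sum $U_n=\sum_{k=1}^n(B_k+D_k)$ into a product of single-increment transforms, the only cosmetic difference being that you group each pair into $\Delta A_k$ and cite the precomputed transform $\E{e^{-s\Delta A_i}}$, while the paper multiplies $\E{e^{-sB_i}}$ and $\E{e^{-sD_i}}$ separately. Your explicit treatment of the $n=0$ case is a small point of extra care the paper omits.
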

\begin{proof}
  We use Corollaries~\ref {cor: independence} and~\ref{cor: laplace d}
  to calculate the Laplace transform of the distribution of $U_n$,
  since $U_n=\sum_{i=1}^{n}(B_i+D_i)$:
  \begin{align*}
    \E{e^{-sU_n}}&=\prod_{i=1}^{n}\E{e^{-s B_i}}\E{e^{-sD_i}} \\
    &=\left(\frac{\lambda+s}{\lambda +
       se^{(\lambda+s)\epsilon}}\right)^n \left(\frac{\lambda}{\lambda+s}\right)^n,
  \end{align*}
hence the result.
\end{proof}
Let us define the function $p_n$ as $$p_n\, :\, x\in \R^+\longmapsto p_n(x) = \Pr(\beta_0(x)=n),$$ i.e., $p_n(x)$ is the
probability of having $n$ clusters in the interval $[0,\, x]$. Since for
all $x\in\R_+$, $0\leq p_n(x)\leq 1$, the Laplace transform of
$p_n$ with respect to $x$,
\begin{eqnarray*}
  \mathcal{L}\{p_n\}(s)=\int_0^{\infty}e^{-s x}p_n(x)\d x,
\end{eqnarray*}
is well defined.
\begin{theorem}
  \label{th: laplace_probs}
  For any  $n\geq0$, the Laplace transform of  $p_n$ is
  given by
  \begin{equation}
    \label{eq: laplace_prob}
    \mathcal{L}\{ p_n \}(s)=\frac{\lambda^n \, e^{(\lambda+s)\epsilon}}{\left(se^{(\lambda+s)\epsilon}+\lambda\right)^n}\cdotp
  \end{equation} 
\end{theorem}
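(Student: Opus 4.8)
The plan is to read off the law of $\beta_0(x)$ from the sequence of cluster completion times and then to telescope in the Laplace domain. Set $E_0=0$ and observe that $E_1<E_2<\cdots$ is strictly increasing, since $E_{k+1}-E_k=D_k+B_{k+1}>0$. A cluster is complete in $[0,x]$ exactly when its right end lies in $[0,x]$, so $\beta_0(x)$ counts the indices $k\ge 1$ with $E_k\le x$. Consequently $\{\beta_0(x)=n\}=\{E_n\le x<E_{n+1}\}$, and hence, for every $n\ge 0$ (with the convention $\Pr(E_0\le x)=1$), $p_n(x)=\Pr(E_n\le x)-\Pr(E_{n+1}\le x)$.

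To move to the Laplace domain I would use Fubini rather than integration by parts: since $E_k\ge 0$, $\int_0^\infty e^{-sx}\Pr(E_k\le x)\,\d x=\E{\int_{E_k}^\infty e^{-sx}\,\d x}=s^{-1}\E{e^{-sE_k}}$, which sidesteps every boundary term and also covers the degenerate value $E_0=0$. Therefore $\mathcal{L}\{p_n\}(s)=s^{-1}(\E{e^{-sE_n}}-\E{e^{-sE_{n+1}}})$, and the task collapses to computing $\E{e^{-sE_n}}$. For $n\ge 1$ I would decompose $E_n=A_n+B_n=D_0+U_{n-1}+B_n$, where $D_0=X_1$ is exponential with mean $1/\lambda$ and, by the strong Markov property at $A_1$, independent of the family $\{B_i,D_i\}$ of Corollary~\ref{cor: independence}. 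Multiplying the transform of $D_0$, the transform of $U_{n-1}$ from Corollary~\ref{cor: laplace delta}, and the transform of $B_n$ from Theorem~\ref{cor: laplace d}, the factor $\lambda+s$ cancels and one obtains $\E{e^{-sE_n}}=\lambda^n(\lambda+se^{(\lambda+s)\epsilon})^{-n}$, valid trivially also at $n=0$; in passing this shows that $E_n$ and $U_n$ share the same law.

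Writing $\Phi=\lambda+se^{(\lambda+s)\epsilon}$ for brevity, the telescoped expression is $\mathcal{L}\{p_n\}(s)=s^{-1}(\lambda^n\Phi^{-n}-\lambda^{n+1}\Phi^{-(n+1)})$; factoring out $\lambda^n\Phi^{-(n+1)}$ and using $\Phi-\lambda=se^{(\lambda+s)\epsilon}$ cancels the prefactor $s^{-1}$ and yields $\mathcal{L}\{p_n\}(s)=\lambda^n e^{(\lambda+s)\epsilon}\Phi^{-(n+1)}$. I expect the one genuinely delicate step to be the combinatorial identification $\{\beta_0(x)=n\}=\{E_n\le x<E_{n+1}\}$, where one must be careful that a partially observed trailing cluster is not counted and that the case $n=0$ (with $\Pr(E_0\le x)\equiv 1$) is treated separately; once this is secured, the remainder only recycles transforms already established. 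As a safeguard I would check that $\sum_{n\ge 0}\mathcal{L}\{p_n\}(s)=\mathcal{L}\{1\}(s)=s^{-1}$, a telescoping identity that also pins down the exponent appearing in the denominator.
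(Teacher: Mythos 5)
Your proof is correct and takes essentially the same route as the paper: the paper's auxiliary variable $Y_n=\Delta X_0+U_{n-1}+B_n$ is exactly your $E_n$, its Fubini-type step $\mathcal{L}\{\Pr(Y_n\leq\cdot)\}(s)=s^{-1}\E{e^{-sY_n}}$ is your transform identity, and the factorization into the transforms of $\Delta X_0$, $U_{n-1}$, $B_n$ followed by telescoping is identical. One remark: your final expression, with denominator exponent $n+1$, agrees with the paper's own proof and with the consistency check $\sum_{n\geq0}\mathcal{L}\{p_n\}(s)=1/s$; the exponent $n$ appearing in the displayed equation of the theorem statement is a typo in the paper.
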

\begin{proof}
  We note that, see Figure \ref{condition},
  \begin{equation*}
    \{\beta_0(x)\geq n\}\Longleftrightarrow 
    \begin{cases}
      \{\Delta X_0+U_{n-1}+B_n\leq L\}&\text{if }n\geq1,\\
        \{\Delta X_0<\infty\}& \text{if }n=0.
    \end{cases}
  \end{equation*}

 \begin{figure}[!ht]
    \centering \scalebox{0.32}{\includegraphics{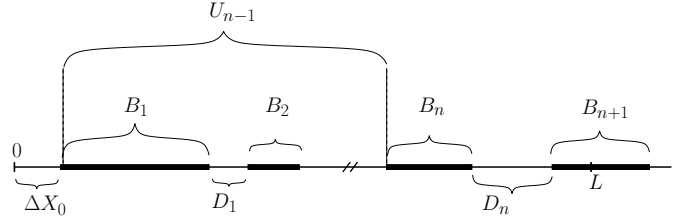}}
    \caption{Illustration of the condition equivalent to $\beta_0\geq
      n$. }
    \label{condition}
  \end{figure}

Hence  
\begin{equation*}
    \Pr(\beta_0(x)= 0)=1-\Pr(\Delta X_0+B_{1}\leq x),
  \end{equation*} 
  and
  \begin{multline}
    \label{eq: events}
    \Pr(\beta_0(x)= n)=\Pr(\Delta X_0+U_{n-1}+B_n\leq x)-\Pr(\Delta X_0+U_{n}+B_{n+1}\leq x).
  \end{multline} 
  Let
  \begin{eqnarray*}
    Y_n = \left\{\begin{array}{ll}
        \Delta X_0+U_{n-1}+B_n&\mbox{if }n\geq1\\
        0&\mbox{if }n=0
      \end{array}\right.,
  \end{eqnarray*}
 then we have:
  \begin{align}
    \mathcal{L}\{\Pr(Y_n\leq \cdot) \}(s)&=\int_0^{\infty}\Pr(Y_n\leq x)e^{-sx}dx\nonumber\\
    &=\int_0^{\infty}\int_0^{x} dP_{Y_n}(y) e^{-sx}dx\notag\\
    &=\frac{1}{s}\E{e^{-sY_n}}\notag\\
    &=\frac{1}{s}\E{e^{-s\Delta X_0}}\E{e^{-sU_{n-1}}}\E{e^{-sB_n}} \notag\\ 
&=\frac{1}{s}\frac{\lambda^n}{\left(e^{\lambda\epsilon}se^{s\epsilon}+\lambda\right)^n},
    \label{eq: laplace_int}
  \end{align}
  for $n\geq1$, where we used Corollary~\ref{cor: independence} in the
  third line. For $n=0$, the Laplace transform is trivial and given by
  $\mathcal{L}\{\Pr(Y_0\leq \cdot)\}(s)=1/s$. Substituting
  Eq.~\eqref{eq: laplace_int} in the Laplace transform of both sides
  of Eq.~\eqref{eq: events} yields:
  \begin{eqnarray*}
    \mathcal{L}\{ p_n \}(s)&=& \mathcal{L}\{\Pr(Y_n\leq \cdot)\}(s)-\mathcal{L}\{\Pr(Y_{n+1}\leq \cdot)\}(s) \nonumber \\
    &=&\frac{e^{\epsilon\lambda}e^{\epsilon s}\lambda^{n}}{\left(e^{\epsilon\lambda}se^{\epsilon s}+\lambda\right)^{n+1}},\ \ n\geq0.
  \end{eqnarray*} 
  The proof is thus complete. 
\end{proof}

\begin{lemma}
  \label{lemma: equivalence}
  Let $m$ be an positive integer. For any $x>0$, when $\epsilon\rightarrow0$,
  $\E{\beta_0^m}\rightarrow\E{N_L^m}$.
\end{lemma}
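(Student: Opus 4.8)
The plan is to fix a coupling in which the underlying Poisson process is kept fixed while only the threshold $\epsilon$ varies, and then to combine an almost sure limit with a uniform, integrable domination so that dominated convergence applies directly to the $m$-th powers.

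First I would record the deterministic bound $\beta_0(x)\le N_x$, valid for \emph{every} $\epsilon>0$: each complete cluster contains at least one point of the process lying in $[0,x)$, distinct clusters contain disjoint sets of such points, so the number of clusters can never exceed the number of points. Raising to the $m$-th power gives $\beta_0(x)^m\le N_x^m$, and since $N_x$ is Poisson with parameter $\lambda x$ all its moments are finite; thus $N_x^m$ is an integrable random variable dominating $\beta_0(x)^m$ uniformly in $\epsilon$.

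Next I would establish the pointwise limit $\beta_0(x)\to N_x$ as $\epsilon\to0$. Fix a realization: almost surely it has finitely many points in $[0,x]$, all distinct and none equal to $0$ or $x$, so the minimal consecutive gap $\delta_1=\min_i(X_{i+1}-X_i)$ and the boundary margin $\delta_2=x-X_{N_x}$ are both strictly positive. For every $\epsilon<\min(\delta_1,\delta_2)$ no two consecutive points are directly connected, so each point forms its own singleton cluster $[X_i,X_i+\epsilon]$, and each such cluster is complete because $X_i+\epsilon\le X_{N_x}+\epsilon<x$. Hence $\beta_0(x)=N_x$ for all sufficiently small $\epsilon$, which gives $\beta_0(x)^m\to N_x^m$ almost surely. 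Combining this with the uniform bound of the previous step, the dominated convergence theorem yields $\E{\beta_0(x)^m}\to\E{N_x^m}$ along any sequence $\epsilon\to0$; taking $x=L$ recovers the stated form.

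The only point that genuinely needs care is the behaviour at the right endpoint: the singleton cluster of the last point must still be counted as complete for small $\epsilon$, which is exactly why I introduce the margin $\delta_2$ and use that $x$ is almost surely not a point of the process. Everything else is a routine application of dominated convergence. As an independent check on the distributional content, one may note that the explicit Laplace transform of Theorem~\ref{th: laplace_probs} tends to $\lambda^n/(s+\lambda)^{n+1}$ as $\epsilon\to0$, which is the transform of the Poisson mass function $\Pr(N_x=n)$, confirming $\beta_0(x)\Rightarrow N_x$; the coupling argument is what additionally upgrades this to convergence of all moments.
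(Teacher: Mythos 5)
Your proposal is correct and follows essentially the same route as the paper: the deterministic bound $\beta_0(x)\le N_x$, almost sure convergence $\beta_0(x)\to N_x$ for $\epsilon$ below the minimal gap of the (a.s.\ finite) configuration, and dominated convergence using the finiteness of Poisson moments. If anything, you are slightly more careful than the paper, whose proof only invokes the minimal inter-point gap $\eta$: your boundary margin $\delta_2=x-X_{N_x}$ is genuinely needed so that the last singleton cluster ends inside $[0,x]$ and is counted as complete, a point the paper's argument glosses over.
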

\begin{proof}
Since there is almost surely a finite number of points in $[0,\, x]$,
for almost all sample-paths, there exists $\eta>0$ such that
$\Delta X_j\ge \eta$ for any $j=1,\, \cdots,\ N_x$.  Hence for
$\epsilon<\eta$, $\beta_0(x)=N_x$. This implies that $\beta_0(x)$
tends almost surely to $N_x$ as $\epsilon$ goes to $0$. Moreover, it is
immediate by the very definition of $\beta_0(x)$ that $\beta_0(x)\le
N_x$. Since for any $m$, $\E{N_x^m}$ is finite, the proof follows by dominated convergence.
\end{proof}

Let $\mbox{Li}_{t}(z)$, $z,t\in\R$, $z<1$, be the
polylogarithm function with parameter $t$, defined by
\begin{eqnarray*}
  \mbox{Li}_{t}(z) =  \sum_{k=1}^{\infty}\frac{z^k}{k^t}\cdotp
\end{eqnarray*}
For $m$ a positive integer, consider the function of $x$
\begin{equation}
\label{eq: sum}
  M_{\beta_0}^m\, :\, x\longmapsto \E{\beta_0^m(x)}=\sum_{i=0}^{\infty}i^mp_i(x).
\end{equation}
Its Laplace transform is given by:
\begin{eqnarray*}
  \mathcal{L}\left\{M_{\beta_0}^m\right\}(s)=\int_{0}^{\infty}\E{\beta_0(x)^m}e^{-sL}\,
  dx.
\end{eqnarray*}
\begin{corollary}
  \label{cor: laplace_moments}
  Let $\alpha$ be defined as follows:
  \begin{eqnarray*}
    \alpha = \frac{e^{\epsilon\lambda}}{\lambda}se^{\epsilon s}.
  \end{eqnarray*}
  The Laplace transform of the $m$-th moment of $\beta_0(L)$ is:
  \begin{eqnarray}
    \label{eq: laplace_moments}
    \mathcal{L}\left\{M_{\beta_0}^m\right \}(s)=\frac{\alpha}{s\left(\alpha+1\right)}\mbox{Li}_{-m}\left(\frac{1}{\alpha+1}\right),
  \end{eqnarray}
  which converges, provided that $\alpha >0.$
\end{corollary}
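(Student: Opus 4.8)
The plan is to start from the series representation \eqref{eq: sum} of the moment and to Laplace-transform it term by term. Writing
\[
  \mathcal{L}\{M_{\beta_0}^m\}(s)=\int_0^\infty e^{-sx}\sum_{i=0}^\infty i^m p_i(x)\,\d x ,
\]
the first move is to interchange the summation and the integration. For real $s>0$ both the kernel $e^{-sx}$ and the summands $i^m p_i(x)$ are non-negative, so Tonelli's theorem applies and yields
\[
  \mathcal{L}\{M_{\beta_0}^m\}(s)=\sum_{i=0}^\infty i^m\,\mathcal{L}\{p_i\}(s).
\]
This reduces the statement to inserting the known expression for $\mathcal{L}\{p_i\}(s)$ and summing an explicit power series.

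Next I would recast $\mathcal{L}\{p_i\}(s)$ in the variable $\alpha$. As established in the proof of Theorem~\ref{th: laplace_probs}, $\mathcal{L}\{p_i\}(s)=\lambda^i e^{(\lambda+s)\epsilon}\big/\bigl(se^{(\lambda+s)\epsilon}+\lambda\bigr)^{i+1}$. The definition of $\alpha$ is precisely the identity $se^{(\lambda+s)\epsilon}=\lambda\alpha$, whence also $e^{(\lambda+s)\epsilon}=\lambda\alpha/s$ and $se^{(\lambda+s)\epsilon}+\lambda=\lambda(\alpha+1)$. Substituting these three relations makes every power of $\lambda$ cancel and leaves the clean form
\[
  \mathcal{L}\{p_i\}(s)=\frac{\alpha}{s\,(\alpha+1)^{\,i+1}}.
\]
Since $0^m=0$ for $m\ge1$, the $i=0$ term drops out and, factoring $\alpha/\bigl(s(\alpha+1)\bigr)$ out of the sum,
\[
  \mathcal{L}\{M_{\beta_0}^m\}(s)=\frac{\alpha}{s(\alpha+1)}\sum_{i=1}^\infty i^m\left(\frac{1}{\alpha+1}\right)^{i}=\frac{\alpha}{s(\alpha+1)}\,\mbox{Li}_{-m}\!\left(\frac{1}{\alpha+1}\right),
\]
using $\mbox{Li}_{-m}(z)=\sum_{i\ge1} i^m z^i$. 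The convergence clause then comes for free: this last series has radius of convergence one, hence converges iff $1/(\alpha+1)<1$, which for the real parameters at hand is exactly $\alpha>0$.

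The only genuinely delicate point is the term-by-term passage in the first display; everything afterwards is algebra and the identification of the polylogarithm. That interchange is the step I would justify most carefully, but it is immediate from Tonelli's theorem once one knows that $\beta_0(x)$ has finite moments — which holds because $\beta_0(x)\le N_x$ with $N_x$ Poisson distributed (cf.\ the domination used in Lemma~\ref{lemma: equivalence}) — so that both $M_{\beta_0}^m(x)$ and its Laplace transform are finite and the rearrangement of the non-negative double series is legitimate.
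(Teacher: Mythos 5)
Your proof is correct and follows essentially the same route as the paper: exchange the sum in Eq.~\eqref{eq: sum} with the Laplace transform, substitute the expression for $\mathcal{L}\{p_i\}(s)$ (correctly using the exponent $i+1$ from the proof of Theorem~\ref{th: laplace_probs}, not the misprinted exponent in its statement), rewrite everything in terms of $\alpha$, and recognize the polylogarithm series. Your explicit Tonelli justification of the interchange and the convergence discussion are details the paper leaves implicit, but they do not constitute a different approach.
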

\begin{proof}
  Applying the Laplace transform of both sides of Eq.~\eqref{eq: sum},
  we get:
  \begin{align*}
    \mathcal{L}\left\{M_{\beta_0}^m\right \}(s)&=\sum_{i=1}^{\infty}i^m\mathcal{L}\{p_i\}(s)\\
    &=\frac{\frac{e^{\epsilon\lambda}}{\lambda}e^{\epsilon s}}{\left(\frac{e^{\epsilon\lambda}}{\lambda}se^{\epsilon s}+1\right)}\sum_{i=1}^{\infty}\frac{i^m}{\left(\frac{e^{\epsilon\lambda}}{\lambda}se^{\epsilon s}+1\right)^i}\\
    &=\frac{\alpha }{s\left(\alpha +1\right)}\mbox{Li}_{-m}\left(\frac{1}{\alpha +1}\right),
  \end{align*}
  concluding the proof.
\end{proof}
We  define $\tiny{\stirling{m}{k}}$ as the Stirling
number of  second kind~\cite{graham}, i.e.,
$\tiny{\stirling{m}{k}}$ is the number of ways to partition a set of
$m$ objects into $k$ groups. They are intimately related to
polylogarithm by the following identity (see~\cite{wood}) valid for any positive
  integer $m,$
  \begin{equation}\label{eq:1}
    \mbox{Li}_{-m}(z)=\sum_{k=0}^{m}\frac{(-1)^{m+k}k!\stirling{m+1}{k+1}}{(1-z)^{k+1}}\cdotp
  \end{equation}
\begin{corollary}
  \label{cor: moments}
  The $m$-th moment of the number of clusters on the interval $[0,L]$
  is given by:
  \begin{eqnarray}
    \label{eq: result_moments}
    M_{\beta_0}^m(L)=\sum_{k=1}^{m}\stirling{m}{k}\left(\frac{L}{\epsilon}-k\right)^k \left(\lambda\epsilon e^{-\epsilon\lambda}\right)^k\indicator{L/\epsilon>k}.
  \end{eqnarray}
\end{corollary}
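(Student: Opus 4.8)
The plan is to recover $M_{\beta_0}^m(L)$ by inverting the Laplace transform supplied by Corollary~\ref{cor: laplace_moments}. Set $\rho := 1/\alpha = (\lambda e^{-\lambda\epsilon}/s)\, e^{-\epsilon s}$, so that $\rho^k = (\lambda e^{-\lambda\epsilon})^k\, e^{-k\epsilon s}/s^k$; the point of this substitution is that powers of $\rho$ invert cleanly. First I would insert the polylogarithm--Stirling identity~\eqref{eq:1} into~\eqref{eq: laplace_moments} at the argument $z = 1/(\alpha+1)$. Since $1-z = \alpha/(\alpha+1)$, one has $(1-z)^{-(k+1)} = (1+\rho)^{k+1}$, while the prefactor $\alpha/(s(\alpha+1)) = 1/(s(1+\rho))$ cancels one power, collapsing the transform to the finite sum $\mathcal{L}\{M_{\beta_0}^m\}(s) = s^{-1}\sum_{k=0}^m (-1)^{m+k}\,k!\,\stirling{m+1}{k+1}\,(1+\rho)^k$.

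The crux is to rewrite this as $s^{-1}\sum_{k=1}^m k!\,\stirling{m}{k}\,\rho^k$, i.e.\ to prove the polynomial identity $\sum_{k=0}^m (-1)^{m+k}k!\stirling{m+1}{k+1}w^k = \sum_{k=1}^m k!\stirling{m}{k}(w-1)^k$ in the single variable $w = 1+\rho$, valid for every $m\ge1$. The cleanest route I see is through exponential generating functions in an auxiliary variable $\tau$. By~\eqref{eq:1} the left side equals $w^{-1}\mbox{Li}_{-m}((w-1)/w)$, and using $\sum_{m\ge0}\mbox{Li}_{-m}(z)\tau^m/m! = ze^{\tau}/(1-ze^{\tau})$ one finds its egf to be $(w-1)e^{\tau}/\bigl(w[w-(w-1)e^{\tau}]\bigr)$; the right side is a surjection-type sum whose egf, via $\sum_{m\ge k}\stirling{m}{k}\tau^m/m! = (e^{\tau}-1)^k/k!$, equals $(w-1)(e^{\tau}-1)/(w-(w-1)e^{\tau})$. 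A short computation shows these two egfs differ by the constant $(w-1)/w$, which sits entirely in the $m=0$ coefficient; hence all coefficients with $m\ge1$ coincide, which is exactly the desired identity. The mismatch at $m=0$ is to be expected, since~\eqref{eq:1} is asserted only for positive $m$. Equivalently, one may expand $(1+\rho)^k$ binomially, exchange the order of summation, and check the resulting inner coefficient equals $j!\stirling{m}{j}$; this is the same statement read off coefficient by coefficient.

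Once the identity is in place the inversion is mechanical. Each summand is $k!\stirling{m}{k}(\lambda e^{-\lambda\epsilon})^k\, e^{-k\epsilon s}/s^{k+1}$, and the elementary pair $\mathcal{L}^{-1}\{e^{-as}/s^{k+1}\}(x) = (x-a)^k\indicator{x>a}/k!$ with $a = k\epsilon$ inverts the finite sum term by term, the factorials cancelling. Evaluating at $x=L$ and regrouping $(\lambda e^{-\lambda\epsilon})^k(L-k\epsilon)^k = (\lambda\epsilon e^{-\lambda\epsilon})^k(L/\epsilon-k)^k$ produces exactly~\eqref{eq: result_moments}, with the shifted power supplying the indicator $\indicator{L/\epsilon>k}$.

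I expect the Stirling identity of the second paragraph to be the sole genuine obstacle: the steps before it are routine manipulation of~\eqref{eq: laplace_moments} and~\eqref{eq:1}, and the steps after it are a term-by-term inversion of a finite combination of functions of the form $e^{-k\epsilon s}/s^{k+1}$. A minor point deserving a line of justification is the legitimacy of inverting the sum termwise, which is harmless since it is finite, together with confirming that the boundary behaviour of the shifted power yields the strict inequality $\indicator{L/\epsilon>k}$.
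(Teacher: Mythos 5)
Your proof is correct, and it diverges from the paper's own argument at precisely the decisive step. Both proofs start the same way (substitute~\eqref{eq:1} with $z=1/(\alpha+1)$ into~\eqref{eq: laplace_moments}, reducing the transform to $s^{-1}\sum_{k=0}^m(-1)^{m+k}k!\stirling{m+1}{k+1}(1+\rho)^k$ with $\rho=1/\alpha$) and end the same way (termwise inversion of $e^{-k\epsilon s}/s^{k+1}$). The difference is how the coefficient of $\rho^k$ gets identified as $k!\stirling{m}{k}$. You prove the polynomial identity $\sum_{k=0}^m(-1)^{m+k}k!\stirling{m+1}{k+1}w^k=\sum_{k=1}^m k!\stirling{m}{k}(w-1)^k$ directly by exponential generating functions; I checked this: the two egfs are $(w-1)e^{\tau}/\bigl(w[w-(w-1)e^{\tau}]\bigr)$ and $(w-1)(e^{\tau}-1)/(w-(w-1)e^{\tau})$, and their difference is indeed the $\tau$-constant $(w-1)/w$, so the coefficients agree for all $m\geq1$ (the identity also checks out by hand for $m=1,2$). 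The paper instead leaves the coefficients as unidentified integers $c_{k,m}$, proves only that $c_{0,m}=0$ (via $\sum_{j=0}^m(-1)^jj!\stirling{m+1}{j+1}=0$), inverts the transform with the $c_{k,m}$ still unknown, and then pins them down \emph{probabilistically}: by Lemma~\ref{lemma: equivalence}, letting $\epsilon\to0$ turns the inverted formula into $\E{N_L^m}$, which equals $\sum_k\stirling{m}{k}(\lambda L)^k$ for a Poisson variable, and matching powers of $\lambda$ forces $c_{k,m}=k!\stirling{m}{k}$. Your route buys self-containedness: you never need Lemma~\ref{lemma: equivalence}, and the combinatorial content is explicit. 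The paper's route avoids all generating-function manipulation, but at the price of invoking the classical Stirling expression for Poisson moments, which is essentially the same egf fact $\sum_{m\geq k}\stirling{m}{k}\tau^m/m!=(e^{\tau}-1)^k/k!$ in probabilistic disguise. Two small points you should make explicit: the interchange of sums in the egf computation needs $|ze^{\tau}|<1$, which suffices because for each fixed $m$ both sides are polynomials in $w$ and agreement on an open set extends everywhere; and the strict versus non-strict inequality in $\indicator{L/\epsilon>k}$ is immaterial since $(L/\epsilon-k)^k$ vanishes at the boundary for $k\geq1$.
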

\begin{proof}
  Using \eqref{eq:1} in the result of Corollary~\ref{cor: laplace_moments}, we get:
  \begin{align*}
    \mathcal{L}\left\{M_{\beta_0}^m\right \}(s)&=\frac{\alpha}{s}\sum_{k=0}^{m}\frac{(-1)^{m+k}k!\stirling{m+1}{k+1}(1+\alpha)^k}{\alpha^{k+1}\left(\alpha+1\right)}\\
    &=\frac{1}{s}\sum_{k=0}^{m}c_{k,m}\frac{1}{\alpha^k},
  \end{align*}
  where the coefficients $c_{k,m}$ are integers given by:
  \begin{eqnarray*}
    c_{k,m}=\sum_{j=k}^{m}(-1)^{j}j!\stirling{m+1}{j+1}{j \choose k}.
  \end{eqnarray*}
  Using the following identity of Stirling numbers~\cite{roman},
  \begin{eqnarray*}
    \sum_{j=0}^{m}(-1)^{j}j!\stirling{m+1}{j+1}=0,
  \end{eqnarray*}
  we find that $c_{0,m}=0$ for $m$ a positive integer. So we can write
  the Laplace transform of the moments as
  \begin{equation*}
    \label{eq: laplace_moments2}
    \mathcal{L}\left\{M_{\beta_0}^m\right \}(s)=\sum_{k=1}^{m}c_{k,m}\frac{\left(\lambda e^{-\epsilon\lambda}\right)^k}{s^{k+1}e^{ks\epsilon}}
  \end{equation*}
  and apply the inverse of the Laplace transform in both size of
  Eq.~\eqref{eq: laplace_moments2} to obtain:
  \begin{align*}
    \label{eq: laplace_moments3}
    M_{\beta_0}^m(L)&=\mathcal{L}^{-1}\left\{\sum_{k=1}^{m}c_{k,m}\frac{\left(\lambda e^{-\epsilon\lambda}\right)^k}{s^{k+1}e^{ks\epsilon}}\right\}(L)\nonumber\\
    &=\sum_{k=1}^{m}c_{k,m}\left(\lambda e^{-\epsilon\lambda}\right)^k\mathcal{L}^{-1}\left\{\frac{1}{s^{k+1}e^{ks\epsilon}}\right\}(L)\nonumber\\
    &=\sum_{k=1}^{m}\frac{c_{k,m}}{k!}(L-k\epsilon)^k \left(\lambda e^{-\epsilon\lambda}\right)^k\indicator{L>k\epsilon}
  \end{align*}
  According to Lemma~\ref{lemma: equivalence}, when
  $\epsilon\rightarrow0$, we obtain
  \begin{eqnarray*}
    M_{\beta_0}^m(L)=\E{N_L^{m}}=\sum_{k=1}^{m}\frac{c_{k,m}}{k!}(L\lambda)^k\indicator{L>0}.
  \end{eqnarray*}
  Hence, for any $\lambda>0$,
  \begin{eqnarray*}
    \sum_{k=1}^{m}\frac{c_{k,m}}{k!}(L\lambda)^k\indicator{L>0}=\sum_{k=1}^{m}\stirling{m}{k}(L\lambda)^k\indicator{L>0},\end{eqnarray*}
  which shows that
  \begin{eqnarray*}
    c_{k,m}=\stirling{m}{k}k!\ .
  \end{eqnarray*}
  Thus, we have proved~\eqref{eq: result_moments} for any positive
  integer $m$.
\end{proof}

\begin{theorem}
  \label{th: main}
For any $n$, $L$,
  $\lambda$ and $\epsilon$, we have:
  \begin{eqnarray}
    \label{eq: main}
    \Pr(\beta_0(L)=n)=\frac{1}{n!}\sum_{i=0}^{\lfloor L/\epsilon\rfloor-n}\frac{(-1)^{i}}{i!} ((L-(n+i)\epsilon) \lambda e^{-\lambda\epsilon})^{n+i}.
  \end{eqnarray}
\end{theorem}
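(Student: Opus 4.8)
The plan is to obtain $p_n(L)=\Pr(\beta_0(L)=n)$ by inverting, in the variable $s$, the Laplace transform produced in Theorem~\ref{th: laplace_probs}, which the computation there delivers in the form
\[
\mathcal{L}\{p_n\}(s)=\frac{\lambda^n e^{(\lambda+s)\epsilon}}{\left(se^{(\lambda+s)\epsilon}+\lambda\right)^{n+1}}.
\]
Since the denominator is transcendental in $s$, I would not attempt a partial-fraction inversion; instead I would convert the quotient into a series of elementary fractions of the type $e^{-as}/s^{k+1}$, each of which inverts explicitly. To do this, first I would factor out the dominant piece $se^{(\lambda+s)\epsilon}$ of the denominator and write $se^{(\lambda+s)\epsilon}+\lambda=se^{(\lambda+s)\epsilon}\bigl(1+\lambda/(se^{(\lambda+s)\epsilon})\bigr)$, noting that the correction term is small, $|\lambda/(se^{(\lambda+s)\epsilon})|<1$, precisely on the right half-plane that a Bromwich inversion contour explores.

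Next I would expand the $(n+1)$-st negative power by the generalized binomial theorem,
\[
\Bigl(1+\tfrac{\lambda}{se^{(\lambda+s)\epsilon}}\Bigr)^{-(n+1)}=\sum_{j\ge0}(-1)^j\binom{n+j}{j}\frac{\lambda^j}{s^j\,e^{j(\lambda+s)\epsilon}},
\]
using $\binom{-(n+1)}{j}=(-1)^j\binom{n+j}{j}$, and then fold in the prefactor $\lambda^n e^{(\lambda+s)\epsilon}/(se^{(\lambda+s)\epsilon})^{n+1}=\lambda^n e^{-n(\lambda+s)\epsilon}/s^{n+1}$. The exponentials combine to $e^{-(n+j)(\lambda+s)\epsilon}$ and the powers of $s$ to $s^{-(n+j+1)}$, so every summand would take the canonical shape
\[
(-1)^j\binom{n+j}{j}\bigl(\lambda e^{-\lambda\epsilon}\bigr)^{n+j}\,\frac{e^{-(n+j)\epsilon s}}{s^{\,n+j+1}}.
\]

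I would then invert each summand using $\mathcal{L}^{-1}\{s^{-(k+1)}\}(L)=L^k/k!$ and the second shifting theorem, which give $\mathcal{L}^{-1}\{e^{-(n+j)\epsilon s}/s^{n+j+1}\}(L)=\frac{(L-(n+j)\epsilon)^{n+j}}{(n+j)!}\indicator{L>(n+j)\epsilon}$. Summing and simplifying $\binom{n+j}{j}/(n+j)!=1/(n!\,j!)$ would collapse the coefficient to $\frac{1}{n!}\frac{(-1)^j}{j!}$; relabelling $j=i$ and grouping $(\lambda e^{-\lambda\epsilon})^{n+i}(L-(n+i)\epsilon)^{n+i}$ into $((L-(n+i)\epsilon)\lambda e^{-\lambda\epsilon})^{n+i}$ then reproduces~\eqref{eq: main}. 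The indicator $\indicator{L>(n+j)\epsilon}$ removes every term with $n+j>L/\epsilon$, so the series truncates to the finite sum $\sum_{i=0}^{\lfloor L/\epsilon\rfloor-n}$ announced in the statement (the boundary term, if any, carries a vanishing factor $(L-(n+i)\epsilon)^{n+i}$).

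The step I expect to be the genuine obstacle is justifying the interchange of the infinite sum with the inverse transform, rather than the algebra. I would avoid any delicate analytic estimate by exploiting the support structure just uncovered: for a fixed $L$ only indices with $(n+j)\epsilon<L$ contribute, so I would truncate the binomial series at some $J>\lfloor L/\epsilon\rfloor-n$, invert the resulting finite sum (where the interchange is automatic), and verify that the inverse transform of the tail is supported on $[(n+J+1)\epsilon,\infty)$ and hence vanishes at the chosen $L$. This makes the formal manipulation rigorous on every bounded interval and completes the identification with~\eqref{eq: main}.
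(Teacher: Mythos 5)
Your proposal is correct, but it takes a genuinely different route from the paper's. The paper never inverts the spatial Laplace transform of Theorem~\ref{th: laplace_probs} directly. Instead it sums over $n$ to obtain the transform of the moments via the polylogarithm (Corollary~\ref{cor: laplace_moments}), inverts that expression term by term with unknown integer coefficients $c_{k,m}$, identifies $c_{k,m}=k!\stirling{m}{k}$ by letting $\epsilon\to0$ and comparing with the moments of $N_L$ (Lemma~\ref{lemma: equivalence}), and only then reconstructs the distribution from the moments: it expands $\E{e^{-s\beta_0(L)}}$ as a power series in $s$, resums it using $\sum_{j\geq k}x^j\stirling{j}{k}/j!=(e^x-1)^k/k!$, and extracts the coefficient of $e^{-ns}$. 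Your route short-circuits all of this machinery (polylogarithm, Stirling identities, the $\epsilon\to0$ limit): factoring $se^{(\lambda+s)\epsilon}$ out of the denominator, expanding the $-(n+1)$ power as a binomial series, and applying the shift theorem to each term $e^{-(n+j)\epsilon s}/s^{n+j+1}$ yields \eqref{eq: main} in a few lines, the coefficients collapsing through $\binom{n+j}{j}/(n+j)!=1/(n!\,j!)$; in effect you prove directly the Laplace pair that the paper only records a posteriori as \eqref{eq: laplace_pair}. You also, correctly, worked with exponent $n+1$ in the denominator, which is what the proof of Theorem~\ref{th: laplace_probs} actually establishes --- the displayed statement \eqref{eq: laplace_prob} carries a typo. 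What the paper's longer detour buys is the closed form \eqref{eq: result_moments} for all moments, which is needed in Section~\ref{examples}; what your route buys is economy and elementarity.

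Two points of rigor, both fixable. First, the bound $|\lambda/(se^{(\lambda+s)\epsilon})|<1$ does not hold on the whole right half-plane (it fails for $|s|$ small); it holds for $\mathrm{Re}(s)$ large enough, which is all you need. Second, your tail argument is close to circular as stated: the tail is itself an infinite series in the $s$-domain, so asserting that ``its inverse transform is supported on $[(n+J+1)\epsilon,\infty)$'' presupposes exactly the term-by-term inversion you are trying to justify (identifying the inverse transform of $\mathcal{L}\{p_n\}-\mathcal{L}\{g_J\}$ as supported there is a nontrivial complex-analytic fact). The clean fix runs your support observation forwards rather than backwards: let $g$ be the sum of the term-by-term inverses, which is a locally finite sum and hence a well-defined, piecewise polynomial function; for real $s$ large the series of absolute values of the transforms converges (the ratio of consecutive terms is at most $(n+1)\lambda e^{-\lambda\epsilon}e^{-\epsilon s}/s$), so Tonelli's theorem gives $\mathcal{L}\{g\}(s)=\mathcal{L}\{p_n\}(s)$ for all such $s$; uniqueness of the Laplace transform then gives $g=p_n$ almost everywhere, hence everywhere on $(0,\infty)$ since both functions are continuous there. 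With that adjustment your proof is complete.
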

\begin{proof}
Since $\beta_0(L)\le N_L$ and since $\E{e^{s N_L}}$ is finite for
any $s\in \R$, 
 we have, for any $s\ge 0$:
  \begin{eqnarray*}
    \E{e^{-s\beta_0(L)}}=\sum_{k=0}^\infty (-1)^k\frac{s^k}{k!} \, \E{\beta_0^k(L)}.
  \end{eqnarray*}
  Rearranging the terms of the right-side hand and substituting
  $M_{\beta_0}^m(L)$, by the result of Eq.~\eqref{eq: result_moments},
  we obtain:
  \begin{equation*}
    \label{eq: roman}
    \E{e^{-s\beta_0(L)}}=\sum_{k=0}^{\infty}\left( (L-k\epsilon)^k\left(\lambda e^{-\lambda\epsilon}\right)^k\indicator{L>k\epsilon}
      \sum_{j=k}^{\infty}\frac{(-s)^j}{j!}\stirling{j}{k}\right)\cdotp
  \end{equation*}
Furthermore, it is known (see~\cite{roman}) that 
  \begin{equation*}
    \sum_{j=k}^{\infty}\frac{x^j}{j!}\stirling{j}{k}=\frac{1}{k!}(e^x-1)^k.
  \end{equation*}
Hence, 
  \begin{eqnarray*}
    \E{e^{-s\beta_0(L)}}=\sum_{k=0}^{\infty}(L-k\epsilon)^k\left(\lambda e^{-\lambda\epsilon}\right)^k\indicator{L>k\epsilon}\frac{(e^{-s}-1)^k}{k!}\cdotp
  \end{eqnarray*}
  By inverting the Laplace transforms, we get:
  \begin{eqnarray*}
    \sum_{k=0}^{\infty}\sum_{i=k}^{\infty}\frac{(-1)^{i}}{i!}{i\choose n} 
    \delta_{(k-n)} (k\epsilon-L)^k\left(\lambda
      e^{-\lambda\epsilon}\right)^k\indicator{L>k\epsilon}, 
  \end{eqnarray*}
where $\delta_a$ is the Dirac measure at point $a$.
  After some simple algebra, we find the expression of the probability
  that an interval contains $n$ complete clusters:
  \begin{equation*}
    \Pr(\beta_0(L)=n)=p_n(L) =\frac{1}{n!}\sum_{i=0}^{\lfloor L/\epsilon\rfloor-n}\frac{(-1)^{i}}{i!} ([L-(n+i)\epsilon] \lambda e^{-\lambda\epsilon})^{n+i},
  \end{equation*}
  concluding the proof.
\end{proof}
\begin{lemma}
  \label{lemma: props_p}
  For $x\geq0$, $p_n(x)$ has the three following properties:
  \begin{enumerate}[i)]
  \item $p_n(x)$ is differentiable;
  \item $\lim_{x\rightarrow\infty}p_n(x)=0$;
  \item $\lim_{x\rightarrow\infty}\frac{dp_n(x)}{dx}=0$.
  \end{enumerate}
\end{lemma}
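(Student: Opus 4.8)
The plan is to read all three properties off the closed form of $p_n$, combining the explicit expression of Theorem~\ref{th: main} with the event representation used in the proof of Theorem~\ref{th: laplace_probs}. Setting $Y_m=\Delta X_0+U_{m-1}+B_m$ (and $Y_0=0$) as there, the equivalence $\{\beta_0(x)\ge m\}\Leftrightarrow\{Y_m\le x\}$ gives $p_n(x)=F_{Y_n}(x)-F_{Y_{n+1}}(x)$, where $F_{Y_m}$ denotes the distribution function of $Y_m$. Independently, Theorem~\ref{th: main} exhibits $p_n$ as a locally finite sum $p_n(x)=\sum_{k\ge n}c_k\,(x-k\epsilon)_+^{\,k}$, with $c_k=\frac{(-1)^{k-n}}{n!\,(k-n)!}\,(\lambda e^{-\lambda\epsilon})^{k}$ and $(\cdot)_+$ the positive part, only finitely many terms being nonzero for each fixed $x$. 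I would use the second representation for property (i) and the first for properties (ii) and (iii).

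For (i), on every open interval $(k\epsilon,(k+1)\epsilon)$ the defining sum is a polynomial in $x$, hence $C^\infty$, so differentiability can fail only at the breakpoints $x=k\epsilon$. At such a point the only summand that changes character is $c_k\,(x-k\epsilon)_+^{\,k}$, and this function is of class $C^{k-1}$ across $k\epsilon$, its value and first $k-1$ derivatives all vanishing there. Since the smallest index feeding $p_n$ is $k=n$, every breakpoint with $k\ge 2$ glues up to class $C^1$, so the gluing is automatic away from the two lowest breakpoints. I expect the main obstacle to be precisely these lowest junctions: the activated term $(x-k\epsilon)_+^{\,k}$ is only Lipschitz when $k=1$ and merely bounded when $k=0$, so the points $x=\epsilon$ (and, for $n=0$, $x=0$) are borderline and must be examined directly by matching one-sided derivatives. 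This case analysis is the delicate part of the argument and the place where the order of vanishing of the newly activated term has to be weighed carefully.

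For (ii), each $Y_m$ is a finite sum of almost surely finite nonnegative random variables, hence almost surely finite, so $F_{Y_m}(x)\uparrow 1$ as $x\to\infty$ for every $m$; consequently $p_n(x)=F_{Y_n}(x)-F_{Y_{n+1}}(x)\to 1-1=0$. For (iii), differentiating the same representation at a point where (i) guarantees a derivative gives $\frac{dp_n}{dx}=f_{Y_n}-f_{Y_{n+1}}$, where $f_{Y_m}=F_{Y_m}'$ is the density of $Y_m$, which exists because $Y_m$ carries the independent absolutely continuous block $\sum_{i=0}^{m-1}D_i$. Writing $Y_m=\sum_{i=0}^{m-1}D_i+\sum_{i=1}^{m}B_i$ and conditioning on the second, independent block, $f_{Y_m}(x)=\int g_m(x-t)\,dF(t)$, where $g_m$ is the $\mathrm{Gamma}(m,\lambda)$ density (from Corollary~\ref{cor: independence}, the $D_i$ are i.i.d.\ exponential) and $F$ is the law of $\sum_{i=1}^m B_i$. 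Since $g_m$ is bounded and $g_m(x-t)\to 0$ for each fixed $t$ as $x\to\infty$, dominated convergence gives $f_{Y_m}(x)\to 0$, hence $\frac{dp_n}{dx}\to 0$, which is (iii). This convolution argument sidesteps the cancellation in the explicit alternating sum and makes the two limit statements routine once (i) is in hand.
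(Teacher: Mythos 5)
The genuine gap is in item (i), and it sits exactly where you flag ``the delicate part'' and then defer it: the lowest breakpoints. If you actually carry out the one-sided derivative matching you postpone, it fails. For $n=0$, Theorem~\ref{th: main} gives $p_0(x)=1$ on $[0,\epsilon)$ and $p_0(x)=1-(x-\epsilon)\lambda e^{-\lambda\epsilon}$ on $(\epsilon,2\epsilon)$, so the left derivative at $x=\epsilon$ is $0$ while the right derivative is $-\lambda e^{-\lambda\epsilon}\neq 0$; similarly for $n=1$, $p_1$ vanishes on $[0,\epsilon)$ and equals $(x-\epsilon)\lambda e^{-\lambda\epsilon}$ on $(\epsilon,2\epsilon)$, so the one-sided derivatives at $\epsilon$ are $0$ and $\lambda e^{-\lambda\epsilon}$. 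Probabilistically this kink is forced by the atom $\Pr(B_1=\epsilon)=e^{-\lambda\epsilon}>0$ (a one-point cluster), which in your own convolution formula makes the density of $Y_1$ jump at $x=\epsilon$. So the deferred case is not merely delicate, it is not completable: classical differentiability fails at $x=\epsilon$ when $n\in\{0,1\}$, and your $C^{k-1}$ gluing argument closes the proof only for the breakpoints with $k\geq 2$. For what it is worth, the paper's own proof has the same defect: it asserts that the newly activated term \emph{and its derivative} vanish at every breakpoint ``for all $j$'', which is true only when the exponent $n+j\geq 2$. Your instinct that this junction is the crux was exactly right; the honest conclusion is that (i) holds only away from $x=\epsilon$ (or in a one-sided or almost-everywhere sense), not that the check succeeds.

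Your treatment of (ii) and (iii) is correct and follows a genuinely different route from the paper, which simply invokes the final value theorem on $\mathcal{L}\{p_n\}$ and on the transform of its derivative. You instead write $p_n=F_{Y_n}-F_{Y_{n+1}}$, get (ii) from almost-sure finiteness of $Y_m$, and get (iii) by representing $f_{Y_m}$ as a convolution of the $\mathrm{Gamma}(m,\lambda)$ density (from the independent exponential block, via Corollary~\ref{cor: independence}) with the law of $\sum_i B_i$, then letting $x\to\infty$ by dominated convergence. This is longer but self-contained, and it avoids the tacit hypotheses of the final value theorem that the paper never verifies; it also sidesteps, as you note, the cancellation in the explicit alternating sum. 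One point to tighten: $\frac{d}{dx}p_n=f_{Y_n}-f_{Y_{n+1}}$ holds a priori only almost everywhere, so you should remark that for large $x$ both sides are continuous (the convolution is continuous away from the finitely many atoms shifted by the discontinuity of $g_1$, and $p_n$ is piecewise polynomial there), hence they agree for all large $x$; that is routine.
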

\begin{proof}
  Let $j$ be a non-negative integer. The function is obviously
  differentiable when $x/\epsilon\not=j$. Besides, we have
  \begin{eqnarray*}
    \lim_{x\rightarrow \epsilon j^+}p_n(x)-\lim_{ x\rightarrow \epsilon j^-}p_n(x)=\lim_{x\rightarrow\epsilon j^+}\frac{(-1)^{j}}{j!} \left((x-(n+j)\epsilon)\frac{1}{a}\right)^{n+j}\cdotp
  \end{eqnarray*}
  Since the right-hand term function of $x$ is zero as well as its
  derivative for all $j$, the function is also derivable when
  $x/\epsilon=j$, which proves i). Items
  ii) and iii) are direct consequences of Final Value theorem in
  the Laplace transform of $p_n$ and its derivative.
\end{proof}
The expression of $p_n$ gives us a Laplace pair
between the $x$ and $s$ domains:
\begin{eqnarray}
  \label{eq: laplace_pair}
  \frac{\indicator{x\geq0}}{n!}\sum_{i=0}^{\lfloor x/\epsilon\rfloor-n}\frac{(-1)^{i}}{i!} \left((x-(n+i)\epsilon)\frac{1}{a}\right)^{n+i}
  \stackrel{\mathcal{L}}{\Longleftrightarrow}\frac{ae^{\epsilon s}}{\left(ase^{\epsilon s}+1\right)^{n+1}} .
\end{eqnarray}
We can use this relation to find the distributions of $B_i$ and $U_n$.
\begin{theorem}
  The distributions of $B_i$ and $U_n$, respectively $f_{B_i}(x)$ and
  $f_{U_n}(x)$ are
  \begin{eqnarray}
    f_{B_i}(x)=\left[\lambda e^{-\epsilon\lambda} p_0(x-\epsilon)+e^{-\epsilon\lambda} \frac{d}{dx}p_0(x-\epsilon)\right]\indicator{x>\epsilon},\label{eq: B}
  \end{eqnarray}
  and
  \begin{eqnarray}
    f_{U_n}(x)=\lambda e^{-\epsilon\lambda} p_{n-1}(x-\epsilon)\indicator{x>\epsilon},\label{eq: U}
  \end{eqnarray}
  where the expressions of $p_0(x-\epsilon)$ and
  $\frac{d}{dx}p_0(x-\epsilon)$ are straightforwardly obtained from
  Eq.~\eqref{eq: main}.
\end{theorem}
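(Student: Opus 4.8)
The plan is to work entirely in the Laplace domain, where both transforms are already available, and to invert them with the Laplace pair~\eqref{eq: laplace_pair}. The starting point is that a nonnegative random variable with a density has a density whose Laplace transform coincides with its moment transform, so $\mathcal{L}\{f_{U_n}\}(s)=\E{e^{-sU_n}}$ and $\mathcal{L}\{f_{B_i}\}(s)=\E{e^{-sB_i}}$, the right-hand sides being furnished by Corollary~\ref{cor: laplace delta} and Theorem~\ref{cor: laplace d}. With $a=e^{\lambda\epsilon}/\lambda$ as in~\eqref{eq: laplace_pair}, I would first record the convenient form $\mathcal{L}\{p_{m}\}(s)=\lambda^{m}e^{(\lambda+s)\epsilon}/(se^{(\lambda+s)\epsilon}+\lambda)^{m+1}$, so the whole problem reduces to matching the two known transforms against a shift factor $e^{-s\epsilon}$ (and, for $B_i$, an extra factor $s$) times $\mathcal{L}\{p_{n-1}\}$ or $\mathcal{L}\{p_0\}$.

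For $U_n$ I would multiply $\mathcal{L}\{p_{n-1}\}(s)=\lambda^{n-1}e^{(\lambda+s)\epsilon}/(se^{(\lambda+s)\epsilon}+\lambda)^{n}$ by $\lambda e^{-\lambda\epsilon}e^{-s\epsilon}$; the identity $e^{-\lambda\epsilon}e^{-s\epsilon}e^{(\lambda+s)\epsilon}=1$ collapses the product to $\lambda^{n}/(se^{(\lambda+s)\epsilon}+\lambda)^{n}$, which is exactly $\E{e^{-sU_n}}$. Since the second shift theorem gives $e^{-s\epsilon}\mathcal{L}\{g\}(s)=\mathcal{L}\{g(\cdot-\epsilon)\indicator{\cdot>\epsilon}\}(s)$ for $g$ supported on $\R^+$, inverting immediately produces $f_{U_n}(x)=\lambda e^{-\epsilon\lambda}p_{n-1}(x-\epsilon)\indicator{x>\epsilon}$, which is~\eqref{eq: U}.

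For $B_i$ the same idea applies with $p_0$. Using $\mathcal{L}\{p_0\}(s)=e^{(\lambda+s)\epsilon}/(se^{(\lambda+s)\epsilon}+\lambda)$ I would write $\E{e^{-sB_i}}=(\lambda+s)e^{-(\lambda+s)\epsilon}\mathcal{L}\{p_0\}(s)=e^{-\lambda\epsilon}\bigl[\lambda\,e^{-s\epsilon}\mathcal{L}\{p_0\}(s)+s\,e^{-s\epsilon}\mathcal{L}\{p_0\}(s)\bigr]$. The first summand inverts by the shift theorem to $\lambda e^{-\epsilon\lambda}p_0(x-\epsilon)\indicator{x>\epsilon}$. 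For the second, setting $G(x)=p_0(x-\epsilon)\indicator{x>\epsilon}$ and noting $G(0)=0$, one has $s\mathcal{L}\{G\}(s)=\mathcal{L}\{G'\}(s)$, so this summand inverts to the derivative of $G$, giving the term $e^{-\epsilon\lambda}\frac{d}{dx}p_0(x-\epsilon)$ on $\{x>\epsilon\}$ and hence~\eqref{eq: B}.

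The delicate point, which I expect to be the main obstacle, is the precise meaning of that derivative. Since $p_0(0)=1$ (evaluate~\eqref{eq: main} at $x=0$), the function $G$ has a unit jump at $x=\epsilon$, so its distributional derivative carries a Dirac mass at $\epsilon$ of weight $e^{-\lambda\epsilon}$ after the prefactor. This is not a defect: $B_i$ genuinely has an atom of mass $e^{-\lambda\epsilon}$ at $\epsilon$, corresponding to a cluster consisting of a single isolated point, which occurs with probability $\Pr(\Delta X>\epsilon)=e^{-\lambda\epsilon}$. Lemma~\ref{lemma: props_p}, which gives continuity of $p_0$ and of its derivative at the interior breakpoints $x=j\epsilon$ with $j\ge1$, then guarantees that no further Dirac masses appear, so that the remaining expression is an honest density; I would therefore interpret $\frac{d}{dx}p_0(x-\epsilon)$ as the derivative of the truncated shifted function $G$ and state the atom at $\epsilon$ explicitly.
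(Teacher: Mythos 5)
Your proposal is correct and takes essentially the same route as the paper: the identical decomposition $\E{e^{-sB_i}}=\lambda e^{-\epsilon\lambda}e^{-\epsilon s}\mathcal{L}\{p_0\}(s)+e^{-\epsilon\lambda}\,s\,e^{-\epsilon s}\mathcal{L}\{p_0\}(s)$ and $\E{e^{-sU_n}}=\lambda e^{-\epsilon\lambda}e^{-\epsilon s}\mathcal{L}\{p_{n-1}\}(s)$, followed by inversion via the second shift theorem. Your explicit treatment of the unit jump of $p_0(\cdot-\epsilon)\indicator{\cdot>\epsilon}$ at $x=\epsilon$ --- which produces the atom $e^{-\lambda\epsilon}\delta_\epsilon$ in the law of $B_i$, i.e.\ the probability of a single-point cluster --- is in fact more precise than the paper's own proof, which disposes of this point with the terse remark ``remembering that $p_0(x^-)=0$'' and whose formula, if read with a classical derivative, would only integrate to $1-e^{-\lambda\epsilon}$.
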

\begin{proof}
  According to Corollary~\ref{cor: laplace d}:
  \begin{eqnarray*}
    \E{e^{-sB_i}}&=&\frac{1}{\lambda}\frac{(\lambda+s)}{\frac{e^{\lambda\epsilon}}{\lambda}se^{s\epsilon}+1}\nonumber\\
    &=&\lambda e^{-\epsilon\lambda} 
    \frac{e^{\epsilon\lambda}}{\lambda}\frac{e^{\epsilon s}}{\frac{e^{\epsilon\lambda}}{\lambda}se^{\epsilon s}+1}
    e^{-\epsilon s}\nonumber
    + e^{-\epsilon\lambda} s   
    \frac{e^{\epsilon\lambda}}{\lambda}\frac{e^{\epsilon s}}{\frac{e^{\epsilon\lambda}}{\lambda}se^{\epsilon s}+1}
    e^{-\epsilon s}\nonumber\\
    &=&\lambda e^{-\epsilon\lambda} e^{-\epsilon s}\mathcal{L}\left\{p_0(\cdot)\right\}(s) + e^{-\epsilon\lambda} e^{-\epsilon s}s\mathcal{L}\left\{ p_0(\cdot)\right\}(s).
  \end{eqnarray*}
  Here, using the inverse Laplace transform established in
  Eq.~\eqref{eq: laplace_pair} and remembering that $p_0(x^-)=0$, we
  get an analytical expression for $f_{B_i}(x)$, proving
  Eq.~\eqref{eq: B}.

  Proceeding in a similar fashion, we can find the distribution of
  $U_n$ by inverting its Laplace transform given by
  Corollary~\ref{cor: laplace delta}:
  \begin{eqnarray*}
    \E{e^{-sU_n}}&=&\frac{1}{\left(\frac{e^{\lambda\epsilon}}{\lambda}se^{s\epsilon}+1\right)^n}\nonumber\\
    &=&\lambda e^{-\epsilon\lambda} 
    \frac{e^{\epsilon\lambda}}{\lambda}\frac{e^{\epsilon s}}{\left(\frac{e^{\epsilon\lambda}}{\lambda}se^{\epsilon s}+1\right)^n}
    e^{-\epsilon s}\nonumber\\
    &=&\lambda e^{-\epsilon\lambda} e^{-\epsilon s}\mathcal{L}\left\{p_{n-1}(\cdot)\right\}(s).
  \end{eqnarray*}
  We thus have Eq.~\eqref{eq: U}.
\end{proof}

We can also obtain the probability that the segment $[0,L]$ is
completely covered by the sensors. To do this, we remember that the
first point (if there is one) is capable to cover the interval
$[X_1-\epsilon, X_1+\epsilon]$. 
\begin{theorem}
  \label{th: coverage}
  Let $R_{m,n}(x)$ be defined as follows:
  \begin{eqnarray*}
    R_{m,n}(x)=\sum_{i=m}^{\lfloor x/\epsilon\rfloor-1}\left[\left( e^{-\lambda\epsilon} \right)^{i+n} \sum_{j=0}^{i+n}\frac{(\lambda[(1-i)\epsilon-x])^{j}}{j!}\right]\cdotp
  \end{eqnarray*}
  Then,
  \begin{multline}
    \label{eq: coverage}
    \Pr([0,L]\mbox{ is
      covered})=R_{0,1}(L)-e^{-\lambda\epsilon}R_{0,1}(L-\epsilon)\\-e^{-\lambda\epsilon}R_{1,0}(L)
    +e^{-2\lambda\epsilon}R_{1,0}(L-\epsilon).
  \end{multline}
\end{theorem}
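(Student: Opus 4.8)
The plan is to reduce the coverage event to an elementary geometric condition on the point configuration and then to recycle the Laplace-transform machinery already built for $p_n$. Since each sensor at $X_i$ covers the symmetric interval $[X_i-\epsilon,X_i+\epsilon]$, the union of the coverage intervals is a single interval $[X_1-\epsilon,\,X_{N_L}+\epsilon]$ exactly when every consecutive interdistance is small enough for neighbouring disks to overlap, i.e.\ $\Delta X_i\le 2\epsilon$. Hence I would first establish the equivalence that $[0,L]$ is covered if and only if there is at least one point, the first point satisfies $X_1\le\epsilon$ (so that $0$ is reached), the last point satisfies $X_{N_L}\ge L-\epsilon$ (so that $L$ is reached), and no interior hole appears. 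This turns a continuum statement into three separable requirements, two of them boundary conditions and one of them an internal ``covering run'' condition.

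Next I would separate the two boundary requirements from the bulk. The factors $e^{-\lambda\epsilon}$ and the shifts $L\mapsto L-\epsilon$ appearing in \eqref{eq: coverage} are precisely the void probabilities and the geometric book-keeping of the two length-$\epsilon$ strips $[0,\epsilon]$ and $[L-\epsilon,L]$. I would therefore write the indicators of $\{X_1\le\epsilon\}$ and $\{X_{N_L}\ge L-\epsilon\}$ in complemented form and expand, using the independence of the Poisson process on disjoint sets (the strong Markov property underlying Corollary~\ref{cor: independence}) to peel each strip off the interior. This produces the four contributions of \eqref{eq: coverage}: the uncorrected bulk term, two single-boundary corrections each carrying one factor $e^{-\lambda\epsilon}$ and one shift $L\mapsto L-\epsilon$, and the doubly corrected term with $e^{-2\lambda\epsilon}$ and both shifts. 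The auxiliary function $R_{m,n}$ is then exactly the object that packages the bulk ``covering run'' probability, the indices $m,n$ recording which boundary adjustment is active.

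To evaluate $R_{m,n}$ I would proceed as in the proofs of Theorem~\ref{th: laplace_probs} and Corollary~\ref{cor: laplace_moments}: sum the contributions of the successive clusters, factor the Laplace transform in $L$ using Corollary~\ref{cor: independence} together with the transforms of $B_i$, $D_i$, $U_n$ computed in \eqref{eq: B}--\eqref{eq: U}, and then invert. The inversion I would not perform by hand but read off from the Laplace pair \eqref{eq: laplace_pair}; the building block $e^{-ks\epsilon}/s^{k+1}$ inverts to a shifted monomial supported on $\{L>k\epsilon\}$, and this is precisely the mechanism that manufactures the cut-off $\lfloor x/\epsilon\rfloor$ in the outer sum of $R_{m,n}$ and the truncated exponential series $\sum_{j=0}^{i+n}(\lambda[(1-i)\epsilon-x])^{j}/j!$ in its inner sum. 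Assembling the four inverted pieces then yields \eqref{eq: coverage}.

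The step I expect to be the main obstacle is handling the two boundary conditions simultaneously without double counting or omission: the requirement that the first point fall in $[0,\epsilon]$ and the last in $[L-\epsilon,L]$ couples to the no-hole condition at both ends, and one must verify that peeling off the two strips genuinely decouples the interior from the boundary, which is where the void probabilities $e^{-\lambda\epsilon}$ and the independence on disjoint intervals do the real work. A secondary, purely mechanical difficulty is keeping the floor-function ranges and the inclusion-exclusion signs consistent through the Laplace inversion, so that the four assembled terms collapse to the stated closed form.
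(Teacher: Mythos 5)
There is a genuine gap here, and it is twofold. First, you never make the reduction on which the whole proof rests. The paper's argument starts from the observation that $\{[0,L]\mbox{ is covered}\}$ is the event that the \emph{first cluster alone} does the job, namely $\{X_1\le\epsilon\}\cap\{B_1\ge L-X_1\}$, and then, since $X_1$ and $B_1$ are independent (Corollary~\ref{cor: independence}), computes
\begin{equation*}
\Pr([0,L]\mbox{ is covered})=\int_0^{\epsilon}\int_{L-x}^{\infty}f_{B_1}(u)\,\lambda e^{-\lambda x}\,du\,dx,
\end{equation*}
where $f_{B_1}$ is already known from \eqref{eq: B} as $\lambda e^{-\epsilon\lambda}p_0(\cdot-\epsilon)+e^{-\epsilon\lambda}\tfrac{d}{dx}p_0(\cdot-\epsilon)$, together with Lemma~\ref{lemma: props_p}. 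The four terms of \eqref{eq: coverage}, with their factors $e^{-\lambda\epsilon}$, $e^{-2\lambda\epsilon}$ and shifts $L\mapsto L-\epsilon$, come from the two terms of $f_{B_1}$ integrated first in $u$ and then in $x\in[0,\epsilon]$; they are \emph{not} an inclusion--exclusion over the two boundary strips $[0,\epsilon]$ and $[L-\epsilon,L]$, which is how you read them (note that your reading does not even match the index pattern: the term $e^{-\lambda\epsilon}R_{1,0}(L)$ carries a factor but no shift, and $e^{-2\lambda\epsilon}R_{1,0}(L-\epsilon)$ carries only one shift). Your alternative decomposition (first point near $0$, last point near $L$, no interior hole) does not factor over disjoint intervals, because the gap condition couples the strips to the interior; you yourself flag this decoupling as the unresolved ``main obstacle,'' and it is precisely the step that fails. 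In the paper's route the difficulty never arises, because the entire event is expressed through the pair $(X_1,B_1)$, whose independence is exactly what Corollary~\ref{cor: independence} provides.

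Second, even if you completed your plan, it would prove a different formula. Your no-hole condition $\Delta X_i\le 2\epsilon$ is the natural geometric criterion for radius-$\epsilon$ disks, but \eqref{eq: coverage} does not encode that event: $R_{m,n}$ is assembled from $p_0$ and $f_{B_1}$, quantities built on the connectivity threshold $\epsilon$ (consecutive gaps at most $\epsilon$), and the equivalence the paper actually uses is coverage $\Leftrightarrow\{X_1\le\epsilon\}\cap\{B_1\ge L-X_1\}$, i.e.\ all gaps within the covering run are at most $\epsilon$. Running your argument with the threshold $2\epsilon$ would require redoing Theorem~\ref{cor: laplace d} through Theorem~\ref{th: main} with $2\epsilon$ inside the cluster structure (while keeping $\epsilon$ at the two ends), and the resulting expression would differ from \eqref{eq: coverage}. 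So to prove the stated theorem you must adopt the paper's single-cluster equivalence; with your starting point, the proof cannot close on the claimed right-hand side.
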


\begin{proof}
  The condition of total coverage is the same as
  \begin{eqnarray*}
   \Bigl\{ \forall x\in[0,L],\exists X_i\in[0,L]\, \Bigl| x\in [X_1-\epsilon, X_1+\epsilon]\Bigr\},
  \end{eqnarray*}
  which means that:
  \begin{eqnarray*}
    \{[0,L]\mbox{ is covered} \}\Leftrightarrow \{B_1\geq L-X_1\}\cap\{X_1\leq \epsilon\}.
  \end{eqnarray*}
  Hence,
  \begin{eqnarray*}
    \Pr([0,L]\mbox{ is covered})=\int_0^{\epsilon}\Pr(B_1\geq L-X_1|X_1=x)dP_{X_1}(x),
  \end{eqnarray*}
  and since $B_1$ and $X_1$ are independent:
  \begin{eqnarray*}
    \Pr([0,L]\mbox{ is covered})=\int_0^{\epsilon}\int_{L-x}^{\infty}f_{B_1}(u)\lambda e^{-x\lambda} du dx.
  \end{eqnarray*}
 The result then follows from Lemma~\ref{lemma: props_p} and  some tedious
 but straightforward algebra.
 \end{proof}


\section{Other Scenarios}
The method can be used to
calculate $p_n$ for other definitions of the number of clusters. We consider  two other definitions: the number of
incomplete clusters and the number of clusters in a circle.

\subsection{Number of incomplete clusters}
The major difference with Sec.~\ref{calculations} is that a cluster is
now taken into account  as soon as one of the point of the cluster is inside the
interval $[0,L]$. So, for instance, in Fig.~\ref{condition}, we count
actually $n+1$ incomplete clusters. We define $\beta_0'(L)$ as the number
of incomplete clusters on an interval $[0,L]$.
\begin{theorem}
  Let $G(k)$ be defined as
  \begin{eqnarray*}
    G(k)=(-1)^{k}\left(e^{-k\lambda\epsilon}\sum_{j=0}^{k}\frac{[\lambda(k\epsilon-L)]^j}{j!}-e^{-\lambda L} \right) \indicator{T>k\epsilon}
  \end{eqnarray*}
  for $k\in\mathbb{N}_+$ and $G(-1)=e^{-\lambda L}$. Then
  \begin{equation*}
    \label{eq: main2}
    \Pr(\beta_0'(L)=n)=\sum_{i=n}^{\lfloor L/\epsilon\rfloor+1}(-1)^{i+n}{i\choose n}(G(i-1)+G(i)),\mbox{ for } n\geq0.
  \end{equation*}
\end{theorem}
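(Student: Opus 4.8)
The plan is to mirror the proof of Theorem~\ref{th: main}, replacing the ``complete cluster'' event by the ``started cluster'' event. First I would record the relevant event equivalence: since the $n$-th cluster has a point in $[0,L]$ if and only if its leftmost point $A_n$ lies in $[0,L]$, and since $A_n=\Delta X_0+U_{n-1}$ (because $U_{n-1}=\sum_{k=1}^{n-1}\Delta A_k=A_n-A_1$ and $A_1=X_1=\Delta X_0$), one has for $n\ge 1$
\begin{equation*}
\{\beta_0'(L)\ge n\}\Longleftrightarrow\{A_n\le L\}=\{\Delta X_0+U_{n-1}\le L\},
\end{equation*}
so that $p_n'(L):=\Pr(\beta_0'(L)=n)=\Pr(A_n\le L)-\Pr(A_{n+1}\le L)$, while $p_0'(L)=1-\Pr(A_1\le L)=e^{-\lambda L}$. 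This is the only place the new definition enters: compared with Theorem~\ref{th: main} the difference is exactly the missing cluster-length term $B_n$.

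Next I would pass to Laplace transforms in $L$. Using $\mathcal{L}\{\Pr(A_n\le\cdot)\}(s)=\frac1s\E{e^{-sA_n}}$ together with the independence of Corollary~\ref{cor: independence}, $\E{e^{-s\Delta X_0}}=\lambda/(\lambda+s)$, and the transform of $U_{n-1}$ from Corollary~\ref{cor: laplace delta}, I get, writing $a=e^{\lambda\epsilon}/\lambda$ so that $1+ase^{\epsilon s}=(\lambda+se^{(\lambda+s)\epsilon})/\lambda$,
\begin{equation*}
\mathcal{L}\{\Pr(A_n\le\cdot)\}(s)=\frac1s\,\frac{\lambda}{\lambda+s}\,\frac{1}{(1+ase^{\epsilon s})^{n-1}}.
\end{equation*}
Subtracting the $n+1$ term and simplifying $\frac{1}{(1+w)^{n-1}}-\frac1{(1+w)^n}=\frac{w}{(1+w)^n}$ with $w=ase^{\epsilon s}$ collapses the $1/s$, leaving the clean recursion
\begin{equation*}
\mathcal{L}\{p_n'\}(s)=\frac{\lambda}{\lambda+s}\,\frac{ae^{\epsilon s}}{(1+ase^{\epsilon s})^n}=\frac{\lambda}{\lambda+s}\,\mathcal{L}\{p_{n-1}\}(s),
\end{equation*}
the last equality being the Laplace pair~\eqref{eq: laplace_pair}. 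Thus $p_n'$ is simply $p_{n-1}$ convolved with the $\mathrm{Exp}(\lambda)$ density.

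The computational heart is the inversion. I would expand $\frac{1}{(1+ase^{\epsilon s})^n}$ for large $s$ as $\sum_{m\ge n}(-1)^{m-n}\binom{m-1}{n-1}(ase^{\epsilon s})^{-m}$; multiplying by $ae^{\epsilon s}\lambda/(\lambda+s)$ turns the generic term, up to the weight $(-1)^{m-n}\binom{m-1}{n-1}$, into $\lambda^{m}e^{-(m-1)\lambda\epsilon}\,e^{-(m-1)\epsilon s}/\big(s^m(s+\lambda)\big)$. The shift $e^{-(m-1)\epsilon s}$ produces the factor $\indicator{L>(m-1)\epsilon}$ and evaluates the inverse transform at $L-(m-1)\epsilon$; the partial-fraction decomposition of $1/\big(s^m(s+\lambda)\big)$ into powers $s^{-j}$, $1\le j\le m$, together with the term $(-1)^m\lambda^{-m}/(s+\lambda)$, then yields exactly the polynomial-minus-exponential combination defining $G(m-1)$. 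Collecting everything gives the compact intermediate form $p_n'(L)=\sum_{m\ge n}(-1)^{m-n}\binom{m-1}{n-1}G(m-1)$.

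Finally I would recast this into the stated symmetric form. Setting $i=m-1$ and applying Pascal's rule $\binom{i+1}{n}=\binom{i}{n}+\binom{i}{n-1}$ lets me rewrite each coefficient $\binom{i}{n-1}$ as a difference of two consecutive binomials, which regroups the single sum over $G(i)$ into $\sum_i(-1)^{i+n}\binom{i}{n}\big(G(i-1)+G(i)\big)$; the endpoints are controlled because $\indicator{L>i\epsilon}$ forces $G(i)=0$ once $i\ge\lfloor L/\epsilon\rfloor+1$, which fixes the upper limit, while $G(-1)=e^{-\lambda L}$ supplies the $n=0$ value through the telescoping $\sum_i(-1)^i(G(i-1)+G(i))=G(-1)$. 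I expect the main obstacle to be purely bookkeeping: keeping the index shifts, the $\indicator{L>k\epsilon}$ truncations, and the signs consistent through the term-by-term inversion and the Pascal rearrangement, and justifying the interchange of the (finite, once truncated) summation with the inverse Laplace transform.
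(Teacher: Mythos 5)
Your proposal is correct, but it reaches the result by a genuinely different route from the paper. The two arguments coincide only at the start: the event equivalence $\{\beta_0'(L)\ge n\}=\{\Delta X_0+U_{n-1}\le L\}$ and the transform $\mathcal{L}\{p_n'\}(s)=\frac{\lambda}{\lambda+s}\,\frac{ae^{\epsilon s}}{(1+ase^{\epsilon s})^{n}}$ (the paper writes this same expression, though it never points out your convolution reading, that the incomplete-cluster law is the complete-cluster law $p_{n-1}$ smoothed by an independent $\mathrm{Exp}(\lambda)$ variable). The divergence is in the inversion. The paper does not invert the $L$-domain transform for fixed $n$ at all; it redoes the moment machinery of Corollaries~\ref{cor: laplace_moments} and~\ref{cor: moments}: it computes $\mathcal{L}\{\E{\beta_0'(\cdot)^m}\}$, uses a partial-fraction identity for $\frac{\lambda}{\lambda+s}\frac{1}{s^k}$ together with the $\epsilon\to0$ coefficient identification of Lemma~\ref{lemma: equivalence} to get $\E{\beta_0'^m}=\sum_{k}\stirling{m+1}{k+1}k!\,G(k)$, then forms $\E{e^{-s\beta_0'(L)}}$, resums it via the Stirling recurrence $\stirling{j+1}{k+1}=\stirling{j}{k}+(k+1)\stirling{j}{k+1}$ into $1+\sum_{k\ge1}(G(k-1)+G(k))(e^{-s}-1)^k$, and finally extracts the coefficient of $e^{-ns}$. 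Your route — binomial expansion of $(1+ase^{\epsilon s})^{-n}$ in negative powers, partial fractions of $1/(s^m(s+\lambda))$, term-by-term inversion (legitimate since the shifts $e^{-(m-1)\epsilon s}$ truncate the sum to $m\le\lfloor L/\epsilon\rfloor+1$ for fixed $L$), then Pascal's rule — bypasses Stirling numbers, polylogarithms and the moment-identification trick entirely. I checked your two computational pivots: the inverse transform of $\lambda^m e^{-(m-1)\lambda\epsilon}e^{-(m-1)\epsilon s}/\bigl(s^m(s+\lambda)\bigr)$ is exactly $G(m-1)$, and the identity $\binom{i}{n}-\binom{i-1}{n}=\binom{i-1}{n-1}$ converts your intermediate form $\sum_{m\ge n}(-1)^{m-n}\binom{m-1}{n-1}G(m-1)$ into the stated $\sum_i(-1)^{i+n}\binom{i}{n}\bigl(G(i-1)+G(i)\bigr)$, with the telescoping sum giving $G(-1)=e^{-\lambda L}$ for $n=0$. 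As for what each approach buys: yours is more elementary and self-contained, and exposes the convolution structure; the paper's recycles machinery it has already built and yields the moments $\E{\beta_0'^m}$ — of independent interest — as a byproduct, which your argument does not produce.
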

\begin{proof}
  The condition of $\beta_0'(L)\geq n$ is now given by:
  \begin{equation*}
    \{\beta_0'\geq n\}\Longleftrightarrow 
    \begin{cases}
       \{\Delta X_0+U_{n-1}\leq L\}&\mbox{if }n\geq1,\\
        \{\Delta X_0<\infty\}& \mbox{if }n=0. 
    \end{cases}
  \end{equation*} 
  We define $Y_n$ as
  \begin{eqnarray*}
    Y_n = \left\{\begin{array}{ll}
        \Delta X_0+U_{n-1}&\mbox{if }n\geq1\\
        0&\mbox{if }n=0.
      \end{array}\right.
  \end{eqnarray*}
  Repeating the same calculations, we find the Laplace transform of
  $ \Pr(\beta_0'(.)=n)$:
  \begin{equation*}
    \mathcal{L}\{ \Pr(\beta_0'(\cdot)=n) \}(s)=
    \begin{cases}
      \dfrac{\lambda}{s+\lambda}\dfrac{e^{\epsilon\lambda}}{\lambda}\dfrac{e^{\epsilon s}}{\left(\frac{e^{\epsilon\lambda}}{\lambda}se^{\epsilon s}+1\right)^{n}}&\mbox{if }n\geq1,\\
        \dfrac{1}{\lambda+s}&\mbox{if }n=0.   
    \end{cases}
  \end{equation*} 
  With this expression, following the lines of Lemma~\ref{lemma:
    equivalence}, we obtain:
  \begin{eqnarray*}
    \mathcal{L}\left\{\E{\beta_0'(\cdot)^m}\right\}(s)=\sum_{k=1}^{m+1}\stirling{m+1}{k}(k-1)!\frac{1}{s^k}\frac{\lambda}{\lambda+s} \left(\frac{\lambda e^{-\lambda\epsilon}}{e^{s\epsilon}}\right )^{k-1}.
  \end{eqnarray*}
  Then, we write:
  \begin{eqnarray*}
    \frac{\lambda}{\lambda+s}\frac{1}{s^k}=\frac{(-1)^{k}}{\lambda^{k-1}}\frac{1}{\lambda+s}+\sum_{i=1}^{k}\frac{1}{s^i} \left(\frac{-1}{\lambda}\right)^{k-i},
  \end{eqnarray*}
  to find an expression with a well known Laplace transform inverse,
  and after inverting it, we obtain:
  \begin{equation*}
    \E{\beta_0'^m}=\sum_{k=0}^{m}\stirling{m+1}{k+1}k!G(k).
  \end{equation*}
  Expanding the Laplace transform of the distribution of $\beta_0'(L)$ in
  a Taylor series and rearranging terms, we get
  \begin{multline*}
    \E{e^{-s\beta_0'(L)}}=1+G(0)
    \sum_{j=1}^{\infty}\frac{(-s)^j}{j!}\stirling{j}{1}+
    \left(\sum_{k=1}^{\infty}G(k)
      \sum_{j=k}^{\infty}\frac{(-s)^j}{j!}\stirling{j+1}{k+1}\right).
  \end{multline*}
  Now, we use another  recurrence that Stirling numbers obey~\cite{roman},
  \begin{equation*}
    \stirling{j+1}{k+1}=\stirling{j}{k}+(k+1) \stirling{j}{k+1},
  \end{equation*}
 to get:
  \begin{eqnarray*}
    \sum_{j=k}^{\infty}\frac{x^j}{j!}\stirling{j+1}{k+1}&=&\sum_{j=k}^{\infty}\frac{x^j}{j!}\left(\stirling{j}{k}+(k+1) \stirling{j}{k+1}\right) \\
    &=&\frac{1}{k!}(e^x-1)^k+\frac{1}{k!}(e^x-1)^{k+1}.
  \end{eqnarray*}
Hence,
  \begin{eqnarray*}
    \E{e^{-s\beta_0'(L)}}=1+\sum_{k=1}^{\infty}(G(k-1)+G(k)) (e^{-s}-1)^k.
  \end{eqnarray*}
  Inverting this expression for any non-negative integer $n$, we have
  the searched distribution.
\end{proof}

\subsection{Number of clusters in a circle}
We investigate now the case where the points of the process are
deployed over a circumference and we want to count the number of
complete clusters, which corresponds to calculate the Euler's
Characteristic of the total coverage, so we call this quantity
$\chi$. Without loss of generality , we can choose an
arbitrary point to be the origin. 
\begin{theorem}
  \label{th: chi}
  The distribution of the Euler's Characteristic, $\chi(L)$, when the
  points are deployed over a circumference of length $L$ is given by
  \begin{multline}
    \label{eq: chi}
    \Pr(\chi(L)=n)=e^{-\lambda L}\indicator{n=0}+(1-e^{-\lambda L})\frac{\lambda e^{-\epsilon\lambda}}{n!}\sum_{i=0}^{\lfloor L/\epsilon\rfloor-n}\left[\frac{(-1)^i}{i!}\right.\\
    \left. ([L-(n+i)\epsilon]\lambda
      e^{-\epsilon\lambda})^{n+i-1}\left(L+(n+i)\left(\frac{1}{\lambda}-\epsilon
        \right) \right)\right],
  \end{multline}
  for $n\geq0$.
\end{theorem}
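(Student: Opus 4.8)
The plan is to split on whether the circle carries any point. An empty configuration has empty coverage and hence Euler characteristic $0$, which produces the term $e^{-\lambda L}\indicator{n=0}$; the complementary event has probability $1-e^{-\lambda L}$, and this is the factor standing in front of the series in \eqref{eq: chi}. On the non-empty event I would use the rotational invariance of the Poisson process together with the fact that conditioning on a point does not alter the law of the remaining points (Slivnyak's property) to pin one of the points at the chosen origin, the others still forming a Poisson process of intensity $\lambda$ on the circle. The problem then becomes a \emph{circular} version of the line problem of Section~\ref{calculations}, the only new feature being that the cluster carrying the origin wraps around the seam.

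First I would transcribe the event decomposition that underlies Theorem~\ref{th: laplace_probs}. On the line, $\{\beta_0(L)\ge n\}$ was written as $\{\Delta X_0+U_{n-1}+B_n\le L\}$; on the circle, cutting at the origin point unrolls the circumference into an interval of length $L$ whose first and last clusters are the two halves of one and the same cluster. I would therefore express $\{\chi(L)\ge n\}$ through the cluster lengths $B_i$ and the cluster-plus-gap cycles making up $U_{n}$, whose laws and Laplace transforms are already available from Corollary~\ref{cor: laplace d}, Corollary~\ref{cor: laplace delta} and the explicit densities $f_{B_i},\,f_{U_n}$ obtained just above.

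Next I would pass to the Laplace transform in the length variable $L$, exactly as in the proofs of Theorem~\ref{th: laplace_probs} and Corollary~\ref{cor: laplace_moments}, so that the convolutions defining $\{\chi(L)\ge n\}$ become products of the transforms of $B$, of $U_{n-1}$ and of the origin cluster. Taking the telescoping difference $\Pr(\chi\ge n)-\Pr(\chi\ge n+1)$ yields a closed transform, which I would invert term by term using the Laplace pair \eqref{eq: laplace_pair}. This should reproduce the summand of Theorem~\ref{th: main} together with the prefactor $\lambda e^{-\epsilon\lambda}$, now multiplied by the extra polynomial factor $L+(n+i)(1/\lambda-\epsilon)$, and, after reinstating the empty-configuration term, give precisely \eqref{eq: chi}.

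The step I expect to be the main obstacle is exactly the bookkeeping of the cluster that straddles the origin. Unlike on the line, this cluster must be counted once even though, after unrolling, it appears at both ends of the interval, and the point pinned at the origin is a size-biased sample inside it. It is this size-biasing — reflecting the mean inter-cluster gap $\E{D_i}=1/\lambda$ through the $(1/\lambda-\epsilon)$ term — that produces the additional factor $L+(n+i)(1/\lambda-\epsilon)$ distinguishing \eqref{eq: chi} from \eqref{eq: main}. Obtaining this factor with its correct combinatorial weight, rather than a spurious boundary contribution at the seam, is the delicate point; once it is in place, the remainder is the same Laplace-inversion machinery already used throughout Section~\ref{calculations}.
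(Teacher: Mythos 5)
Your proposal is correct, and its first half is exactly the paper's argument: the split on $\{N_L=0\}$, pinning the origin at one of the points, and the identity $\{\chi(L)\ge n\}\Leftrightarrow\{U_{n-1}+B_n\le L\}\cap\{N_L>0\}$ all appear verbatim in the paper. Note that the seam bookkeeping you single out as the main obstacle is entirely absorbed by this identity: on the circle, $\{U_{n-1}+B_n>L\}$ \emph{is} the event that the $n$-th cluster merges with the cluster containing the origin, so no separate combinatorial correction is needed. Where you genuinely diverge is the inversion step. The paper never inverts in the length variable directly; it reruns the moment machinery of Corollary~\ref{cor: laplace_moments}, Corollary~\ref{cor: moments} and Theorem~\ref{th: main} (Stirling/polylogarithm identities, identification of coefficients through the $\epsilon\to0$ limit as in Lemma~\ref{lemma: equivalence}, expansion of $\E{e^{-s\chi(L)}}$ in powers of $e^{-s}-1$, then inversion of that generating function). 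Your route is shorter and sound: dropping the initial gap $\Delta X_0$ multiplies the line transform by $1/\E{e^{-s\Delta X_0}}=(\lambda+s)/\lambda$, so the transform in the length variable of $\Pr(\chi(L)=n\mid N_L>0)$ is $\frac{\lambda+s}{\lambda}\,\mathcal{L}\{p_n\}(s)$; inverting with \eqref{eq: laplace_pair} and the derivative rule (licit by Lemma~\ref{lemma: props_p}, the boundary term $p_n(0^+)\delta_0$ mattering only at $L=0$) gives the conditional probability $p_n(L)+\lambda^{-1}\frac{d}{dL}p_n(L)$, which upon substituting \eqref{eq: main} is precisely the bracket of \eqref{eq: chi} — the same mechanism the paper itself uses to derive $f_{B_i}$ in \eqref{eq: B}. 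Your approach buys brevity and reuse of \eqref{eq: laplace_pair}; the paper's buys uniformity with the proofs of Theorem~\ref{th: main} and of the incomplete-cluster theorem, at the price of redoing the Stirling-number manipulations. One caveat: your attribution of the factor $L+(n+i)\left(1/\lambda-\epsilon\right)$ to size-biasing of the cluster straddling the origin is not the actual mechanism — that factor is nothing but the derivative term $\lambda^{-1}\frac{d}{dL}p_n(L)$ created by the multiplier $(\lambda+s)/\lambda$; had you inserted an explicit size-biased correction for the origin cluster on top of the event identity above, you would have double counted.
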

\begin{proof}
 If there is no points on the circle,  $\chi(L)=0$. Otherwise, if there is at least one point, we
  choose the origin at this point and we have  equivalence between
  the events:
  \begin{eqnarray*}
    \{\chi(L)\geq n\}\Leftrightarrow \left\{\begin{array}{ll}
        \{U_{n-1}+B_n\leq L\}\cap\{N_L>0\}&\mbox{if }n\geq1,\\
        \{\Delta X_0<\infty\}& \mbox{if }n=0.
      \end{array}\right.
  \end{eqnarray*}
  In Fig.~\ref{condition_circle} we present an example of this
  equivalence.

  \begin{figure}[!ht]
    \begin{center}
      \includegraphics*[width=\textwidth]{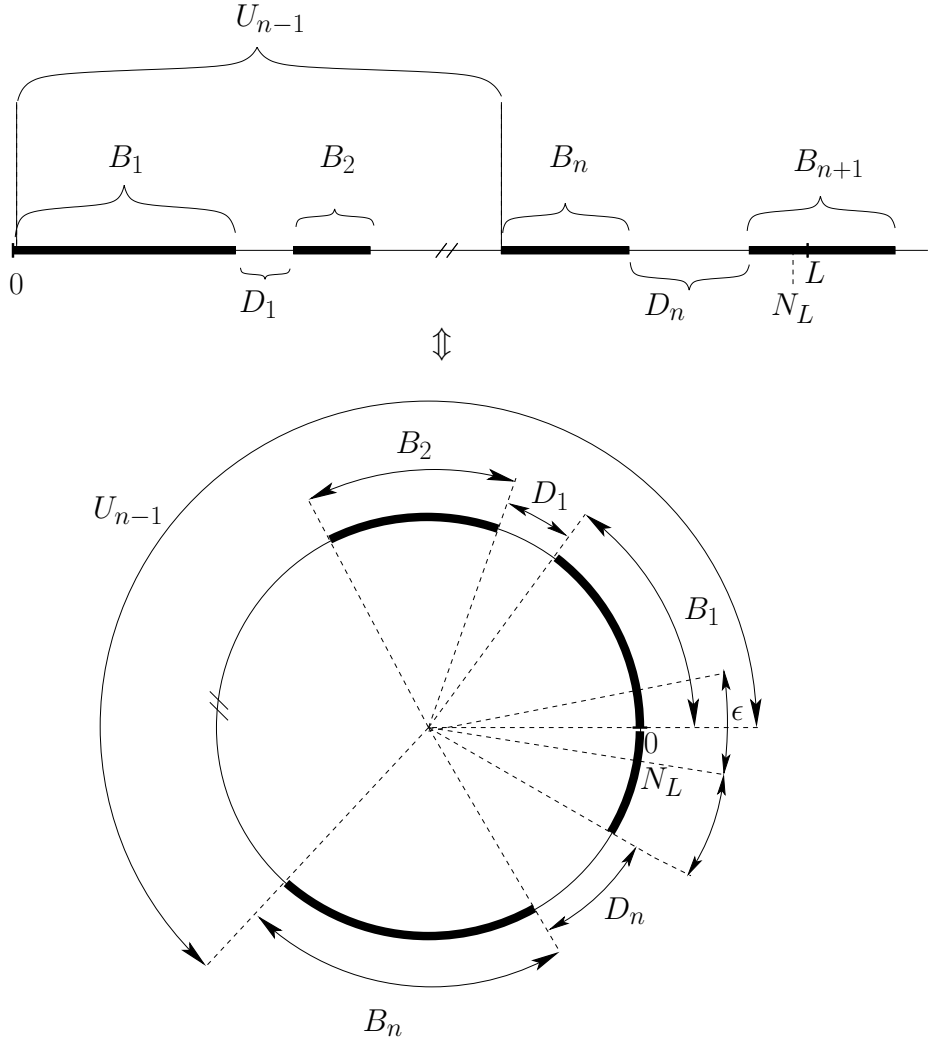}
    \end{center}
    \caption{Illustration of the condition equivalent to $\chi(L)\geq
      n$. Since the coverage of the last point on $[0,L]$ overlaps the
      cluster with a point in zero, they are actually contained in the
      same cluster }\label{condition_circle}
  \end{figure}

  We can define $Y_n$ as
  \begin{eqnarray*}
    Y_n = \left\{\begin{array}{ll}
        U_{n-1}+B_n&\mbox{if }n\geq1\\
        0&\mbox{if }n=0,
      \end{array}\right.
  \end{eqnarray*}
  to find the Laplace transform or $\Pr(\chi(L)=n)$:
  \begin{eqnarray}
    \mathcal{L}\{ \Pr(\chi(\cdot)=n) \}(s)=
    (1-e^{-\lambda L})\dfrac{\lambda+s}{\lambda}\dfrac{e^{\epsilon\lambda}}{\lambda}\dfrac{e^{\epsilon s}}{\left(\frac{e^{\epsilon\lambda}}{\lambda}se^{\epsilon s}+1\right)^{n}}\cdotp
  \end{eqnarray} 
The number of clusters is almost
  surely equal to the number of points when $\epsilon\rightarrow0$,
  so 
  \begin{multline*}
    \E{\chi(L)^m}=(1-e^{-\lambda L})\lambda e^{-\epsilon\lambda}\sum_{k=1}^{m}\left[\stirling{m}{k}([L-k\epsilon]\lambda e^{-\epsilon\lambda})^{k-1}\right.\\
    \left. \left(L+k\left(\frac{1}{\lambda}-\epsilon \right)
      \right)\indicator{L>k\epsilon}\right].
  \end{multline*}
  Expanding the Laplace transform in a Taylor series and rearranging
  terms, as we did previously, yields
  \begin{multline*}
    \E{e^{-s\chi(L)}}=(1-e^{-\lambda L})\lambda
    e^{-\epsilon\lambda}\sum_{k=0}^{\infty}\left[\left([L-k\epsilon]\lambda
        e^{-\epsilon\lambda}\right)^{k-1}\left(L+k\left(\frac{1}{\lambda}-\epsilon
        \right)\right)\right.\\\left.  \indicator{L>k\epsilon}
      \sum_{j=k}^{\infty}\frac{(-s)^j}{j!}\stirling{j}{k}\right].
  \end{multline*}
  Since
  \begin{equation*}
    \sum_{j=k}^{\infty}\frac{(-s)^j}{j!}\stirling{j}{k}=\frac{(e^{-s}-1)^k}{k!},
  \end{equation*}
  we can directly invert this Laplace transform, add the case where
  there are no points for $\chi(L)=0$, and the theorem is proved.
\end{proof}
\section{Examples}
\label{examples}
We consider some examples to illustrate the results of the
paper. Here, the behavior of the mean and the variance of $\beta_0(L)$ as
well as $Pr(\beta_0(L)=n)$ are presented.

From Eq.~\eqref{eq: result_moments}, we have that $\E{\beta_0(L)}$ is
given by:
\begin{eqnarray*}
\E{\beta_0(L)}=(L-\epsilon)\lambda e^{-\epsilon \lambda}\indicator{L>\epsilon}.
\end{eqnarray*}
This expression agrees with the intuition that there are three typical
regions given a fixed $\epsilon$.  When $\lambda$ is much smaller than
$1/\epsilon$, the number of clusters is approximatively the number of
sensors, since the connections with few sensors will unlikely happen,
which can be seen from the fact that $\E{\beta_0(L)}\rightarrow
L\lambda$ when $\lambda\rightarrow 0$. As we increase $\lambda$, the
mean number of direct connections overcomes the mean number of sensors
and, at some value of $\lambda$, we expect that $\E{\beta_0(L)}$
decreases, when adding a point is likely to connect disconnected
clusters. We remark that the maximum occurs exactly for
$\epsilon=1/\lambda$, i.e., when the mean distance between two sensors
equals the threshold distance for them to be connected. At this
maximum, $\E{\beta_0(L)}$ takes the value of
$(L/\epsilon-1)e^{-1}$. Finally, when $\lambda$ is too large, all
sensors tend to be connected and there is only one cluster which even
goes beyond $L$, so there are no complete clusters into the interval
$[0,L]$. This is trivial when we make $\lambda\rightarrow\infty$ in
the last equation. Figure~\ref{beta0_mean} shows this behavior when
$L=4$ and $\epsilon=1$.
\begin{figure}[thp]
  \centering \scalebox{.85}{\includegraphics{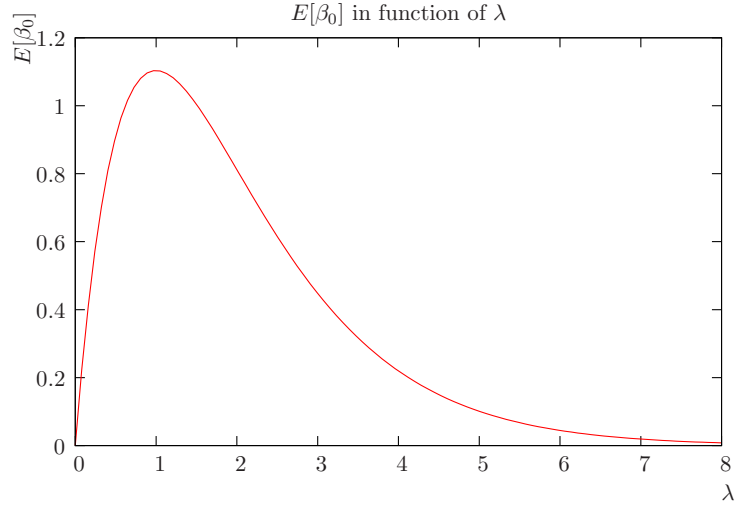}}
  \caption{Variation of the mean number of clusters in function of
    $\lambda$ when $L=4$ and $\epsilon=1$.}
  \label{beta0_mean}
\end{figure}

The variance can be obtained also by Eq.~\eqref{eq: result_moments}:
\begin{multline*}
  \mbox{Var}(\beta_0(L))=(L-\epsilon)\lambda e^{-\epsilon \lambda}\indicator{L>\epsilon}+(L-2\epsilon)\lambda^2 e^{-2\epsilon \lambda}\indicator{L>2\epsilon}\\-(L-\epsilon)^2\lambda^2 e^{-2\epsilon \lambda}\indicator{L>\epsilon},
\end{multline*}
and under the condition that $L>2\epsilon$:
\begin{eqnarray*}
  \mbox{Var}(\beta_0(L))=(L-\epsilon)\lambda e^{-\epsilon \lambda}+\epsilon(3\epsilon-2L)\lambda^2 e^{-2\epsilon \lambda}.
\end{eqnarray*}
Fig.~\ref{beta0_var}
\begin{figure}[thp]
  \centering \scalebox{0.85}{\includegraphics{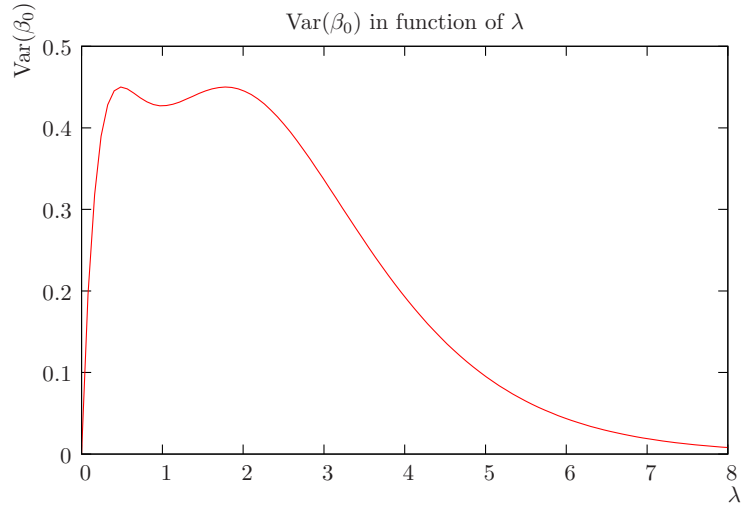}}
  \caption{Behavior of the variance of the number of clusters in
    function of $\lambda$ when $L=4$ and $\epsilon=1$.}
  \label{beta0_var}
\end{figure}
shows a plot of Var($\beta_0(L)$) in function of $\lambda$ for $L=4$ and
$\epsilon=1$.  We can expect that, when $\lambda$ is small compared to
$\epsilon$, the plot should be approximatively linear, since there
would not be too much connections in the network and the variance of
the number of clusters should be close to the variance of the number
of sensors given by $\lambda L$. Since $\beta_0(L)$ tends almost surely
to 0 when $\lambda$ goes to infinity, Var$(\beta_0(L))$ should also tend
to 0 in this case. Those two properties are observed in the
plot. Besides, we find the critical points of this function, and
again, $\lambda=1/\epsilon$ is one of them and at this value
Var$(\beta_0(L))=(L/\epsilon)e^{-1}+(3-2L/\epsilon)e^{-1}$. The other two
are the ones satisfying the transcendant equation:
\begin{eqnarray*}
  \lambda e^{-\lambda\epsilon}=\frac{L-\epsilon}{2\epsilon(2L-3\epsilon)}\cdotp
\end{eqnarray*}
By using the second derivative, we realize that $1/\epsilon$ is
actually a minimum. Besides, if $L\leq2\epsilon$, there is just one
critical point, a maximum, at $\lambda=1/\epsilon$.

The last example in the section is performed with the result obtained
in Theorem~\ref{th: main}. We consider again $L=4$ and $\epsilon=1$ to
obtain the following distributions:
\begin{eqnarray*}
  \Pr(\beta_0(L)=0)&=&1-3\lambda
  e^{-\lambda}+2\lambda^2e^{-2\lambda}-1/6\lambda^3e^{-3\lambda}, \\
  \Pr(\beta_0 (L)=1)&=&3\lambda
  e^{-\lambda}-4\lambda^2e^{-2\lambda}+1/2\lambda^3e^{-3\lambda}, \\
  \Pr(\beta_0 (L)=2)&=&2\lambda^2e^{-2\lambda}-1/2\lambda^3e^{-3\lambda}, \\
  \Pr(\beta_0 (L)=3)&=&1/6\lambda^3e^{-3\lambda}, \\
  \Pr(\beta_0 (L)>3)&=&0.
\end{eqnarray*}
Those expressions are simple and they have at most four terms, since
$L=4\epsilon$. We plot these functions in Fig.~\ref{chi_probs_1d}. The
critical points on those plots at $\lambda=1/\epsilon$ are confirmed
for the fact that, in function of $\lambda$, for every $n$,
$\Pr(\chi(L)=n)$ can be represented as a sum
\begin{eqnarray*}
  \sum_{i=0}^{j}q_{i,j}(\lambda e^{-\lambda\epsilon})^i
\end{eqnarray*}
where the coefficients $q_{i,j}$ are constant in relation to
$\lambda$. However, $(\lambda e^{-\lambda\epsilon})^i$ has a critical
point at $\lambda=1/\epsilon$ for all $i>0$, so this should be also a
critical point of $\Pr(\chi(L)=n)$. If $\lambda$ is small, we should
expect that $\Pr(\chi(L)=0)$ is close to one, since it is likely to $N$
have no points. For this reason, in this region, $\Pr(\chi(L)=n)$ for
$n>0$ is small. When $\lambda$ is large, we expect to have very large
clusters, likely to be larger than $L$, so it is unlikely to have a
complete cluster in the interval and, again, $\Pr(\chi(L)=0)$ approaches
to the unity, while $\Pr(\chi(L)=n)$ for $n>0$ become again small.
\begin{figure}[thp]
  \centering \scalebox{.85}{\includegraphics{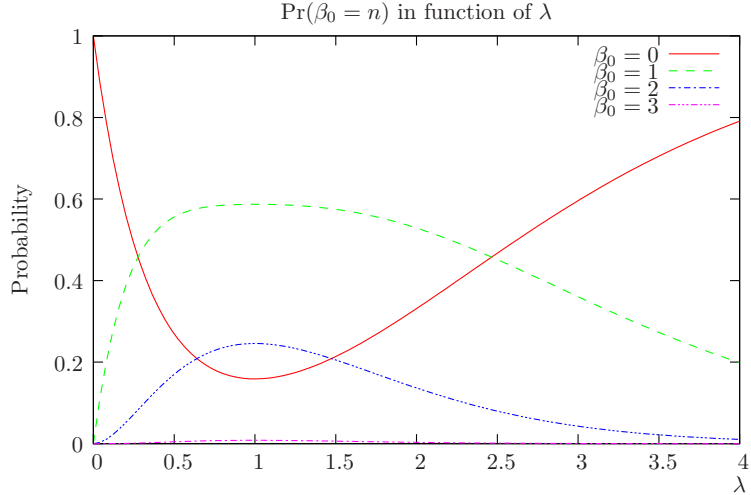}}
  \caption{Probabilities of connectiveness, $\Pr(\beta_0(L)=n)$, for
    $n=0,1,2,3$, in function of $\lambda$ when $L=4$ and
    $\epsilon=1$.}
  \label{chi_probs_1d}
\end{figure}

\end{document}